\theoremstyle{plain}
\newtheorem{theorem}{Theorem}
\newtheorem{proposition}{Proposition}
\newtheorem{lemma}{Lemma}
\newtheorem{conjecture}{Conjecture}
\newtheorem{assumption}{Assumption}
\def\A{\mathbb{A}}
\def\C{\mathbb{C}}
\def\Ind{\text{Ind}}
\begin{document}

\title{Theta Functions on Covers of Symplectic Groups}
\dedicatory{To Freydoon Shahidi on his 70th birthday}
\author{Solomon Friedberg}
\author{David Ginzburg}
\email{ (corresponding author) solfriedberg@gmail.com}
\thanks{This work was supported by the BSF,
grant number 2012019, and by the NSF, grant number 1500977 (Friedberg).}
\subjclass[2010]{Primary 11F70; Secondary 11F27, 11F55}
\keywords{Symplectic group, metaplectic cover, theta representation, descent integral, unipotent orbit, generic representation, 
Whittaker function}
\begin{abstract} We study the automorphic theta representation $\Theta_{2n}^{(r)}$ on the $r$-fold cover of the symplectic group $Sp_{2n}$.  This representation is obtained from the residues of Eisenstein series on this group.
If $r$ is odd, 
$n\le r <2n$, then under a natural hypothesis on the theta representations, we show that 
 $\Theta_{2n}^{(r)}$ may be used to construct a generic representation
 $\sigma_{2n-r+1}^{(2r)}$ on the $2r$-fold cover of $Sp_{2n-r+1}$.  Moreover, when $r=n$ the
Whittaker functions of this representation attached to factorizable data 
 are factorizable, and the unramified local factors may be computed in terms of $n$-th order Gauss sums. 
If $n=3$ we prove these results, which in that case pertain to the six-fold cover of $Sp_4$,  unconditionally.
We expect that in fact the representation constructed here, $\sigma_{2n-r+1}^{(2r)}$, is  precisely $\Theta_{2n-r+1}^{(2r)}$; that
is, we conjecture relations between theta representations on different covering groups.
\end{abstract}

\maketitle

\section{Introduction}

The classical metaplectic group is a double cover of a symplectic group, and may be defined over a local field or the ring of adeles of a number field.  This group arises in the study of theta functions, 
which may be constructed directly as sums over the global rational points of an isotropic subspace.   This construction 
of theta functions appears
to be special to the double cover case.  

To generalize, let $r>1$ be any integer, and let
$F$ be a number field with a full set of $r$-th roots of unity.  Then there is an $r$-fold cover of the symplectic
group  $Sp_{2n}(\A)$, where $\A$ is the adeles of $F$, 
denoted $Sp_{2n}^{(r)}(\A)$.  When $r>2$ one may construct theta functions for such a group, but only indirectly -- theta
functions are obtained 
as the multi-residues of the minimal parabolic (Borel) Eisenstein series on $Sp_{2n}^{(r)}(\A)$ at 
the ``right-most" (suitably normalized) pole.  (This construction obtains when $r=2$ as well.)
 As such, these theta functions are automorphic forms and give an
automorphic representation $\Theta_{2n}^{(r)}$, called the theta representation.

Since for a general cover the theta functions may only be defined via residues, it is considerably more challenging
to determine basic information such as their Whittaker or other Fourier
coefficients in this situation.  Indeed, the Fourier coefficients of the theta function
have not been determined for covers of $SL_2$ of any degree higher than $4$.  And when $n>1$ very little is known.  
Moreover, except in the double cover case there are few examples of any automorphic functions on such a group, there 
 is no information about when their Whittaker coefficients might be factorizable (one expects rarely),  and it is not clear
 to what extent one might expect relations between theta functions or other automorphic
forms on different covers of different groups. 

 In a first example of such information, the authors \cite{F-G} have established
relations between
the Whittaker coefficients of certain automorphic functions on different covering groups in two specific cases, related to the conjectures 
of  Patterson and Chinta-Friedberg-Hoffstein concerning the Fourier coefficients of theta functions on 
covers of $GL_2$ .  The key to doing so was to adapt
descent methods, originally used by Ginzburg-Rallis-Soudry \cite{G-R-S1} in the context of 
algebraic groups or double covers, to higher degree covering groups.  In particular, the treatment of
the adelic version of the conjecture
of Chinta-Friedberg-Hoffstein  relied on the study of the theta function on the three-fold cover of $Sp_4$.

 In this work we investigate theta functions and descent integrals on covers of symplectic groups of arbitrary rank.  
 As we shall explain, for $r$ odd a descent construction allows one to pass from
the theta representation $\Theta_{2n}^{(r)}$ to a representation $\sigma_{2n-r+1}^{(2r)}$ realized
inside the space of automorphic square-integrable functions on 
$Sp^{(2r)}_{2n-r+1}(F)\backslash Sp^{(2r)}_{2n-r+1}(\A)$.  We expect that this representation is in fact the
theta representation $\Theta_{2n-r+1}^{(2r)}$ (Conjecture~\ref{conjecture-new-strong}), and we show that $\sigma_{2n-r+1}^{(2r)}$ has non-zero
projection to this representation.
In Theorem~\ref{th1} below we establish that if $r$ is odd, 
$n\le r <2n$, then the  representation $\sigma_{2n-r+1}^{(2r)}$ is generic. Moreover, if $r=n$, 
we show that the Whittaker coefficients of 
$\sigma_{n+1}^{(2n)}$ arising from factorizable inputs are factorizable.
As a first new case, this is true of the Whittaker coefficients
of the descent representation on the six-fold cover of $Sp_4$.
 Then in Theorem~\ref{th2} we show that the unramified local 
contributions to the Whittaker coefficients in the factorizable
case may be expressed as sums of Whittaker coefficients of the theta representation on the $n$-fold cover of $GL_n$.  
These coefficients are $n$-th order Gauss sums by the work of Kazhdan and Patterson \cite{K-P}.   Thus we exhibit a new
class of automorphic forms on covering groups whose Whittaker coefficients are factorizable with algebraic factors at good places.

These results are proved on
two conditions.  One concerns the specific unipotent orbit attached to $\Theta_{2n}^{(r)}$ (Conjecture~\ref{conj1}).  
This is an object of on-going study \cite{S1}; in the last Section we give a sketch of the proof of this Conjecture in the
first new case, $n=r=3$.  A
second concerns the precise characters within a given unipotent orbit which support a nonzero Fourier coefficient
(Assumption~\ref{assume1}).  This is not needed in all cases, and in particular is not needed in the factorizable case
$\sigma_{n+1}^{(2n)}$.   
Our results concerning Conjecture 1 are thus sufficient to establish unconditionally that
the Whittaker function of the descent to the six-fold cover of $Sp_4$ is Eulerian for factorizable data, and to compute its unramified 
local factor in terms of cubic Gauss sums.  We expect that this descent is in fact the theta function on the six-fold cover of $Sp_4$.

\section{Definition and Properties  of the Theta Representation}\label{def}
Let $n$ and $r$ be two natural numbers.  Let $F$ be a number field containing a full set $\mu_r$ of $r$-th roots of unity,
and let $\A$ be the adeles of $F$.  Let $Sp_{2n}$ denote the symplectic group consisting of the $2n\times 2n$ matrices leaving
invariant the standard symplectic form $<x,y>=\sum_{i=1}^{n} (x_iy_{2n-i+1}-x_{n+i}y_{n-i+1})$.
Denote by $Sp_{2n}^{(r)}(\A)$ the metaplectic $r$-fold cover of the symplectic group $Sp_{2n}(\A)$.  This group 
consists of pairs $(g,\zeta)$ with $g\in Sp_{2n}(\A)$ and $\zeta\in\mu_r$; it
may be
obtained in the standard way from covers of the local groups $Sp_{2n}(F_\nu)$ as $\nu$ ranges over the places of $F$,
identifying the copies of $\mu_r$.  The multiplication in the local group is determined by a cocyle $\sigma$. 
If $\sigma_{BLS}$ is the 2-cocycle of
Banks-Levy-Sepanski \cite{B-L-S} for $SL_{2n}(F_\nu)$ and $w$ is the permutation such that the conjugate of $Sp_{2n}$
by $w$ preserves the symplectic form $<x,y>'=\sum_{i=1}^n x_i y_{n+i}-y_ix_{n+i}$, then we take
$\sigma(g_1,g_2)=\sigma_{BLS}(wg_1w^{-1}, wg_2w^{-1})$.

We shall be concerned with the theta representation defined on the group $Sp_{2n}^{(r)}(\A)$, denoted $\Theta_{2n}^{(r)}$. This representation is defined as 
the space spanned by the residues of the Borel Eisenstein series on $Sp_{2n}^{(r)}({\A})$, similarly to the definition for the general linear
group in \cite{K-P}, p.\ 118.  Our first task is to give a brief account of this construction.

Let $B_{2n}$ denote the standard Borel subgroup of $Sp_{2n}$, and let $T_{2n}$ denote the maximal torus of $Sp_{2n}$ which is a subgroup of $B_{2n}$.  Working either locally at a completion of $F$ or globally over $\A$, if $H$ is any algebraic subgroup of $Sp_{2n}$, we let $H^{(r)}$ denote its inverse image in $Sp_{2n}^{(r)}$.  We will call this inverse image parabolic or unipotent if $H$ has this property.
Let  $Z(T_{2n}^{(r)})$ denote the center of $T_{2n}^{(r)}$. 
Suppose first that $r$ is odd.
Given a character $\mu$ of $T_{2n}$, we may use it to define a genuine character,
again denoted $\mu$, of $Z(T_{2n}^{(r)})$.  
(The notion of genuine depends on an embedding of the group of $r$-th roots of unity $\mu_r$ into $\C^\times$.  We will fix this and omit it from the notation.)
Extending it trivially to any maximal abelian subgroup of $T_{2n}^{(r)}$ and then inducing up, we obtain a representation of $Sp_{2n}^{(r)}$ which we denote by $\Ind_{B_{2n}^{(r)}}^{Sp_{2n}^{(r)}}\mu$. This representation is determined uniquely by the choice of $\mu$ and this procedure may be carried out both locally and globally.   Here we consider
unnormalized induction, and shall include the modular character $\delta_{B_{2n}}^{1/2}$ when we require normalized induction.

Let $s_i$ be complex variables, and let $\mu$ be the character of $T_{2n}$ given by
\begin{equation*}\label{mu1}
\mu(\text{diag}(a_1,\ldots,a_n,a_n^{-1},\ldots, a_1^{-1}))=|a_1|^{s_1}\cdots |a_n|^{s_n}.
\end{equation*}
If this construction is carried out over a local field, then $|a|$ denotes the normalized local absolute value, while if it is
carried out over $\A$, then $|a|$ denotes the product of these over all places of $F$.
One may form the induced representation $\Ind_{B_{2n}^{(r)}({\A})}^{Sp_{2n}^{(r)}({\A})}\mu\delta_{B_{2n}}^{1/2}$, and
for each vector in this space, one may construct the multi-variable Eisenstein series $E(h,s_1,\ldots,s_n)$ defined on the group $Sp_{2n}^{(r)}({\A})$. Computing the constant term as in \cite{K-P}, Prop.\ II.1.2, we deduce that the poles of the partial intertwining operator associated with the long Weyl element of $Sp_{2n}$ are determined by
\begin{equation}\label{mu2}
\frac{\prod_{i<j} [ \zeta_S(r(s_i-s_j))\zeta_S(r(s_i+s_j))] \prod_i\zeta_S(rs_i)}{\prod_{i<j} [ \zeta_S(r(s_i-s_j)+1)\zeta_S(r(s_i+s_j)+1)] \prod_i\zeta_S(rs_i+1)}.
\end{equation}
Here $S$ is a finite set of all places including the archimedean places and all  finite places $\mu$ such that $|r|_\nu\neq1$
and $\zeta_S(s)$ is the partial global zeta function.

The expression \eqref{mu2} has a multi-residue at the point
$$s_n=\frac{1}{r},\qquad r(s_i-s_{i+1})=1.$$ From this we deduce that the Eisenstein series $E(h,s_1,\ldots,s_n)$ has a multi-residue at that point, and we denote the residue representation by $\Theta_{2n}^{(r)}$.  If 
 $\mu_0$ denotes the character of $T_{2n}$ defined by
\begin{equation}\label{mu3}
\mu_0(\text{diag}(a_1,\ldots,a_n,a_n^{-1},\ldots, a_1^{-1}))=|a_1|^{\frac{n}{r}}\cdots |a_{n-1}|^{\frac{2}{r}}|a_n|^{\frac{1}{r}} ,\notag
\end{equation}
then it follows that $\Theta_{2n}^{(r)}$ is a subquotient of the induced representation $\Ind_{B_{2n}^{(r)}({\A})}^{Sp_{2n}^{(r)}({\A})}\mu\delta_{B_{2n}}^{1/2}$. From this we deduce that $\Theta_{2n}^{(r)}$ is also a subrepresentation of the induced representation
$\Ind_{B_{2n}^{(r)}({\A})}^{Sp_{2n}^{(r)}({\A})}\chi_{Sp_{2n}^{(r)},\Theta}$ where
\begin{equation*}\label{mu4}
\chi_{Sp_{2n}^{(r)},\Theta}(\text{diag} (a_1,\ldots,a_n,a_n^{-1},\ldots, a_1^{-1}))=|a_1|^{\frac{n(r-1)}{r}}\cdots |a_{n-1}|^{\frac{2(r-1)}{r}}|a_n|^{\frac{r-1}{r}}.
\end{equation*}

We turn to even degree coverings.  We shall only be concerned with the covers of degree twice an odd integer.
If $r$ is odd, the definition of the theta representation for $Sp_{2m}^{(2r)}$ is similar. 
There is a small difference since the maximal parabolic subgroup of $Sp_{2m}$ whose Levi part is $GL_m$ splits under the double cover. Because of this, when computing the intertwining operator corresponding to the long Weyl element, 
one finds that its poles are determined by
\begin{equation}\label{mu5}
\frac{\prod_{i<j} [ \zeta_S(r(s_i-s_j))\zeta_S(r(s_i+s_j))] \prod_i\zeta_S(2rs_i)}{\prod_{i<j} [ \zeta_S(r(s_i-s_j)+1)\zeta_S(r(s_i+s_j)+1)] \prod_i\zeta_S(2rs_i+1)}.\notag
\end{equation}
Accordingly we define  $\Theta_{2m}^{(2r)}$ to be the multi-residue of the Eisenstein series at the point
$$s_m=\frac{1}{2r},\qquad r(s_i-s_{i+1})=1.$$
Then the representation $\Theta_{2m}^{(2r)}$ is a subquotient of the 
 induced representation $\Ind_{B_{2m}^{(r)}({\A})}^{Sp_{2m}^{(2r)}({\A})}\mu_e\delta_{B_{2m}}^{1/2}$  where $\mu_e$ denotes the character of $T_{2m}$ defined by
\begin{equation}\label{mu6}
\mu_e(\text{diag}(a_1,\ldots,a_m,a_m^{-1},\ldots, a_1^{-1}))=|a_1|^{\frac{2m-1}{2r}}\cdots |a_{m-1}|^{\frac{3}{2r}}|a_m|^{\frac{1}{2r}}.\notag
\end{equation}
It follows that $\Theta_{2m}^{(2r)}$ is also a subrepresentation of the induced representation $\Ind_{B_{2m}^{(r)}({\A})}^{Sp_{2m}^{(2r)}({\A})}\chi_{Sp_{2m}^{(2r)},\Theta}$ where
\begin{equation*} 
\chi_{Sp_{2n}^{(2r)},\Theta}(\text{diag} (a_1,\ldots,a_m,a_m^{-1},\ldots, a_1^{-1}))=|a_1|^{\frac{2mr-(2m-1)}{2r}}\cdots |a_{m-2}|^{\frac{6r-5}{2r}} |a_{m-1}|^{\frac{4r-3}{2r}}|a_m|^{\frac{2r-1}{2r}}.
\end{equation*}

We now develop some properties of the theta representations that follow from induction in stages, or, equivalently,
by taking an $(n-1)$-fold residue of $E(h,s_1,\dots,s_n)$ to obtain a maximal parabolic Eisenstein series attached to
theta representations of lower rank groups.
Let $1\le a\le n$. Denote by $P_{2n,a}$ the maximal parabolic subgroup of $Sp_{2n}$ whose Levi part is $GL_a\times Sp_{2(n-a)}$, and let $L_{2n,a}$ denote its unipotent radical.   We write $i$ for the inclusion of $GL_a$ in $P_{2n,a}$ in this paragraph
but  we suppress $i$ afterwards.
From the block compatibility of the cocycle $\sigma_{BLS}$ (Banks-Levy-Sepanski \cite{B-L-S}) and a short computation it follows
that if $g_1,g_2\in GL_a$ over a local field then
$\sigma(i(g_1),i(g_2))=\sigma_{GL_a}(g_1,g_2)^2 (\det g_1,\det g_2)^{-1}$, where $\sigma_{GL_a}$ is the metaplectic 2-cocycle 
of \cite{B-L-S} for $GL_a$.
Thus for both the $r$ and $2r$-fold covers, $i$ determines an $r$-fold cover of $GL_a$.  We write this cover $GL_a^{(r)}$ (which case we are in will always be clear from context).

Suppose first that $r$ is odd.
Let $\Theta_{GL_a}^{(r)}$ denote the theta representation of the group $GL_a^{(r)}({\A})$, as constructed in \cite{K-P}
(or the corresponding character if $a=1$).
Then it follows from induction in stages that 
$\Ind_{B_{2n}^{(r)}({\A})}^{Sp_{2n}^{(r)}({\A})}\mu_0\delta_{B_{2n}}^{1/2}$ is equal to $\Ind_{P_{2n,a}^{(r)}({\A})}^{Sp_{2n}^{(r)}({\A})}(\Theta_{GL_a}^{(r)}\otimes  \Theta_{2(n-a)}^{(r)}) \delta_{P_{2n,a}}^{\frac{r+1}{2r}}$.
From this we deduce the following. Let $E_a(g,s)$ denote the Eisenstein series of $Sp_{2n}^{(r)}({\A})$ associated with the induced representation $\Ind_{P_{2n,a}^{(r)}({\A})}^{Sp_{2n}^{(r)}({\A})}(\Theta_{GL_a}^{(r)}\otimes  \Theta_{2(n-a)}^{(r)}) \delta_{P_{2n,a}}^{s}$. 
(For the construction of such Eisenstein series more generally, see Brubaker and Friedberg \cite{B-F}.)
Then the representation
$\Theta_{2n}^{(r)}$ is a residue of $E_a(g,s)$ at the point $s=(r+1)/2r$. Of course, this can also be verified directly by studying the corresponding intertwining operators.

In the case of a cover of degree $2r$, $r$ odd, the situation is roughly similar.
In this case, the representation $\Ind_{B_{2m}^{(r)}({\A})}^{Sp_{2m}^{(2r)}({\A})}\mu_e\delta_B^{1/2}$ is equal to $\Ind_{P_{2m,a}^{(r)}({\A})}^{Sp_{2m}^{(2r)}({\A})}(\Theta_{GL_a}^{(r)}\otimes  \Theta_{2(m-a)}^{(2r)}) \delta_{P_{2n,a}}^{\frac{(r+1)(2m-a)+r} {2r(2m-a+1)}}$. Hence if we let $E_a(g,s)$ denote the Eisenstein series associated with the induced representation $\Ind_{P_{2m,a}^{(r)}({\A})}^{Sp_{2m}^{(2r)}({\A})}(\Theta_{GL_a}^{(r)}\otimes  \Theta_{2(m-a)}^{(2r)}) \delta_{P_{2n,a}}^{s}$, then we deduce that $\Theta_{2m}^{(2r)}$ is the residue of $E_a(g,s)$ at the point $s=\frac{(r+1)(2m-a)+r} {2r(2m-a+1)}$.

From these observations, we will deduce the following proposition.  Here and below, 
matrices are embedded in metaplectic groups by the trivial
section without additional notation, and we call metaplectic elements diagonal or unipotent when their projections to the 
linear group are.
 \begin{proposition}\label{propconstant}  Suppose that $r$ is odd.
Let $\theta_{2n}^{(r)}$ be in the space of $\Theta_{2n}^{(r)}$.  Then there exist functions $\theta_{GL_a}^{(r)}\in \Theta_{GL_a}^{(r)}$, 
$\theta_{2(n-a)}^{(r)}\in \Theta_{2(n-a)}^{(r)}$
such that for all diagonal $g$ in $GL_a^{(r)}({\A})$ which lies in the center of the Levi part of the parabolic group $P_{2n,a}^{(r)}({\A})$ and for all unipotent $h\in Sp^{(r)}_{2(n-a)}({\A})$, $v\in GL_a^{(r)}({\A})$, one has
\begin{equation}\label{constant10}
\int\limits_{L_{2n,a}(F)\backslash L_{2n,a}({\A})}\theta_{2n}^{(r)}(u(gv,h))du=\chi_{Sp_{2n}^{(r)} ,\Theta}(g)\,\theta_{GL_a}^{(r)}(v)\,
\theta_{2(n-a)}^{(r)}(h).
\end{equation}
A similar identity holds in the even-degree cover case.
\end{proposition}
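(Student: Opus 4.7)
The plan is to realize $\theta_{2n}^{(r)}$ as the residue of a maximal parabolic Eisenstein series, exchange the constant term integration with the residue operator, and then invoke the standard decomposition of the constant term of an Eisenstein series. By the induction-in-stages observations preceding the proposition, $\Theta_{2n}^{(r)}$ is the residue at $s_0=(r+1)/(2r)$ of the Eisenstein series $E_a(\phi,g,s)$ associated with the induced representation $\Ind_{P_{2n,a}^{(r)}(\A)}^{Sp_{2n}^{(r)}(\A)}(\Theta_{GL_a}^{(r)}\otimes\Theta_{2(n-a)}^{(r)})\delta_{P_{2n,a}}^{s}$. So choose a holomorphic section $\phi$ such that $\text{Res}_{s=s_0}E_a(\phi,g,s)=\theta_{2n}^{(r)}(g)$.

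Next, compute the constant term along $L_{2n,a}$ by exchanging the integration with the residue, which is justified by standard convergence arguments for Eisenstein series and their constant terms on a deleted neighborhood of $s_0$. For the maximal parabolic $P_{2n,a}$, the double coset set $W_M\backslash W/W_M$ has just two elements: the identity class and the class of a long representative $w_0$ that stabilizes the Levi $GL_a\times Sp_{2(n-a)}$ (inverting the $GL_a$ factor). Thus the constant term of $E_a(\phi,\cdot,s)$, restricted to the Levi, equals
\begin{equation*}
\phi(m,s)+M(w_0,s)\phi(m,s),
\end{equation*}
where $M(w_0,s)$ is the long intertwining operator. Because $\phi$ is holomorphic at $s_0$, only the second summand contributes to the residue, giving
\begin{equation*}
\int_{L_{2n,a}(F)\backslash L_{2n,a}(\A)}\theta_{2n}^{(r)}(um)\,du=\bigl(\text{Res}_{s=s_0}M(w_0,s)\bigr)\phi(m,s_0).
\end{equation*}

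Finally, evaluate at $m=(gv,h)$ with $g=tI_a$ central in the Levi. A Gindikin--Karpelevich-type computation on the metaplectic cover, following the pattern of \cite{K-P}, yields that $\text{Res}_{s=s_0}M(w_0,s)$ lies in the same induced representation up to a central twist, so the right-hand side factors as $|t|^{\alpha}\,\theta_{GL_a}^{(r)}(v)\,\theta_{2(n-a)}^{(r)}(h)$ for some exponent $\alpha$ and for $\theta_{GL_a}^{(r)}\in\Theta_{GL_a}^{(r)}$, $\theta_{2(n-a)}^{(r)}\in\Theta_{2(n-a)}^{(r)}$ depending on $\phi$. Combining $|t|^{\alpha}$ with the modular factor $\delta_{P_{2n,a}}^{s_0+1/2}(g)$ and the central character of $\Theta_{GL_a}^{(r)}$ at $g$, a direct exponent computation matches the formula for $\chi_{Sp_{2n}^{(r)},\Theta}$ displayed before the proposition, giving \eqref{constant10}. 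The $2r$-fold cover case is parallel, with $s_0=((r+1)(2m-a)+r)/(2r(2m-a+1))$ and $\chi_{Sp_{2m}^{(2r)},\Theta}$ replacing their odd-cover counterparts. The main obstacle is the explicit determination of $\text{Res}_{s=s_0}M(w_0,s)$: one must verify that the pole at $s_0$ is simple, that the residue operator is nonzero, and that its image lies in the space $\Theta_{GL_a}^{(r)}\otimes\Theta_{2(n-a)}^{(r)}$ rather than in some other subquotient of the local inductions, which requires tracking the Gindikin--Karpelevich factors on the cover together with the cocycle contributions from $\sigma_{BLS}$.
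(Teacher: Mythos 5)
Your overall strategy is exactly the paper's: realize $\theta_{2n}^{(r)}$ as the residue at $s_0=(r+1)/2r$ of the Eisenstein series $E_a(\cdot,s)$ attached to $\Ind_{P_{2n,a}^{(r)}}^{Sp_{2n}^{(r)}}(\Theta_{GL_a}^{(r)}\otimes\Theta_{2(n-a)}^{(r)})\delta_{P_{2n,a}}^{s}$, exchange the constant term along $L_{2n,a}$ with the residue, kill everything except the long Weyl element, and read off the factorization from the residue of the intertwining operator. You also correctly flag that the substantive analytic content lies in identifying $\mathrm{Res}_{s=s_0}M(w_0,s)$ and its image.

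There is, however, one concrete error that leaves a gap. You assert that for the maximal parabolic $P_{2n,a}$ the double coset space $W_M\backslash W/W_M$ has only two elements, so that the constant term is $\phi+M(w_0,s)\phi$ and the identity term is discarded simply because $\phi$ is holomorphic. This is false for $Sp_{2n}$ once $n\ge 2$: for instance, for the Siegel parabolic the double cosets $P\backslash Sp_{2n}/P$ are indexed by $\dim(L\cap L_0)$ for Lagrangians $L$, giving $n+1$ classes, and the Klingen-type parabolics likewise have intermediate cosets. Consequently the unfolded constant term contains, besides the trivial and long-Weyl-element terms, a sum over intermediate Weyl representatives whose inner summations are themselves (products of) Eisenstein series on the Levi. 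To conclude that only the long element survives the residue you must show these intermediate contributions are \emph{holomorphic} at $s=(r+1)/2r$; this is exactly the step the paper carries out by the method of Kudla--Rallis (Section 1.2 of that reference). Your argument as written silently assumes these terms do not exist, so it does not establish the proposition without adding this holomorphy analysis.
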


The requirement that $h,v$ be unipotent could be dropped at the expense of a cocyle; being unipotent guarantees that it is $1$.

\begin{proof}
Since $\Theta_{2n}^{(r)}$ is the residue of the Eisenstein series $E_a(\cdot,s)$, we first consider the constant term
\begin{equation}\label{constant11}
\int\limits_{L_{2n,a}(F)\backslash L_{2n,a}({\A})}E_a(u(gv,h),s)\,du.
\end{equation}
For $\text{Re}(s)$ large we can unfold the Eisenstein series and deduce that \eqref{constant11} is equal to
\begin{equation}\label{constant12}
\sum_{w\in P_{2n,a}(F)\backslash Sp_{2n}(F)/P_{2n,a}(F)}\int\limits_{L_{2n,a}^w(F)
\backslash L_{2n,a}({\A})}\sum_{\gamma\in (w^{-1}P_{2n,a}(F)w\cap P_{2n,a}(F))\backslash P_{2n,a}(F)}f_s(w\gamma u(gv,h))\,du.
\end{equation}
Here $L_{2n,a}^w=w^{-1}L_{2n,a}w\cap L_{2n,a}^-$ where $L_{2n,a}^-$ is the conjugate of $L_{2n,a}$ by the long Weyl element. Also, we have $f_s\in
\Ind_{P_{2n,a}^{(r)}({\A})}^{Sp_{2n}^{(r)}({\A})}(\Theta_{GL_a}^{(r)}\otimes  \Theta_{2(n-a)}^{(r)}) \delta_{P_{2n,a}}^{s}$.
Notice that all elements in $P_{2n,a}(F)\backslash Sp_{2n}(F)/P_{2n,a}(F)$ can be chosen to be Weyl elements. 
Similarly to \cite{K-R}, Section 1.2, one can check that for all Weyl elements in $P_{2n,a}(F)\backslash Sp_{2n}(F)/P_{2n,a}(F)$ 
which are not the long Weyl element, the inner summation is a certain Eisenstein series or product 
of such series. Moreover, one can also check that the Eisenstein series in 
the corresponding summand in \eqref{constant12} is holomorphic at $s=(r+1)/2r$. Hence, taking the residue in \eqref{constant11} and \eqref{constant12} at $s=(r+1)/2r$, we are left only with the long
Weyl element, and for that element  we obtain the identity
$$\int\limits_{L_{2n,a}(F)\backslash L_{2n,a}({\A})}\theta_{2n}^{(r)}(u(gv,h))du=
\text{Res}_{s=(r+1)/2r}M_wf_s(gv,h)$$
where $M_w$ is the intertwining operator attached to $w$.
From this identity \eqref{constant10} follows.
\end{proof}

Similar statements for other groups are established in \cite{B-F-G} Proposition 3.4
and  \cite{F-G} Proposition 1.

We end this section with a general conjecture about the unipotent orbit attached to the representation $\Theta_{2n}^{(r)}$,
and give a consequence of this conjecture.
Recall that if $\pi$ denotes an automorphic representation of a reductive group,
then the set ${\mathcal O}(\pi)$ was defined in \cite{G1}.   It is the largest unipotent orbit that supports a nonzero coefficient
for this representation.
The extension of the definition of this set to the metaplectic groups is
clear.
In this paper we are interested in the set ${\mathcal O}(\Theta_{2n}^{(r)})$. The conjecture regarding this set is given as follows. Assume that $r<2n$. Write $2n=a(n,r)r+b(n,r)$ where $a(n,r)$ and $b(n,r)$ are both nonnegative numbers such that $0\le b(n,r)\le r-1$. 
We recall that a partition is a symplectic partition if any odd number in the partition occurs with even multiplicity. As defined in \cite{C-M}, given a partition $\lambda$ of $2n$, we define the $Sp$ collapse of $\lambda$ to be the greatest symplectic partition which is smaller than $\lambda$. We have

\begin{conjecture}\label{conj1}
Let $r$ be an odd number.
If $r<2n$, then the set ${\mathcal O}(\Theta_{2n}^{(r)})$ consists of the partition which is the $Sp$ collapse of the partition $(r^{a(n,r)}b(n,r))$.   If $r>2n$, then the representation $\Theta_{2n}^{(r)}$ is generic.
\end{conjecture}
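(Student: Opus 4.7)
The plan is to establish Conjecture~\ref{conj1} by matching an upper bound and a lower bound on the set ${\mathcal O}(\Theta_{2n}^{(r)})$.  Both cases $r<2n$ and $r>2n$ are closely linked, since both exploit the embedding of $\Theta_{2n}^{(r)}$ into the principal series $\Ind_{B_{2n}^{(r)}({\A})}^{Sp_{2n}^{(r)}({\A})}\chi_{Sp_{2n}^{(r)},\Theta}$ established in Section~\ref{def}.

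For the upper bound when $r<2n$, I would work inside this principal series.  Given a symplectic partition $\lambda$ strictly greater than the $Sp$ collapse of $(r^{a(n,r)}b(n,r))$ in the dominance order, a sequence of root-exchange manipulations in the spirit of \cite{G-R-S1}, combined with unfolding of the induction, should reduce the non-vanishing of the $\lambda$-Fourier coefficient of $\Theta_{2n}^{(r)}$ to a compatibility condition between the character $\chi_{Sp_{2n}^{(r)},\Theta}$ (up to a Weyl conjugate) and a specific character on the unipotent subgroup attached to $\lambda$.  A direct computation using the explicit $r$-block shape of $\chi_{Sp_{2n}^{(r)},\Theta}$ on the torus should preclude any such compatibility and thus force the coefficient to vanish.

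For the lower bound, I would use Proposition~\ref{propconstant} and proceed by induction on $n$.  Choose $a$ so that the Levi factor of $P_{2n,a}$ is $GL_a\times Sp_{2(n-a)}$; the constant term along $L_{2n,a}$ expresses $\theta_{2n}^{(r)}$ in terms of $\theta_{GL_a}^{(r)}\otimes \theta_{2(n-a)}^{(r)}$.  Using the orbit structure of the Kazhdan--Patterson theta $\Theta_{GL_a}^{(r)}$ from \cite{K-P} together with the inductive hypothesis for $\Theta_{2(n-a)}^{(r)}$, one constructs an explicit Fourier integral on $\Theta_{2n}^{(r)}$ whose $\lambda$-coefficient is non-zero for the $\lambda$ predicted by the conjecture: one stacks an $a$-row coming from the $GL_a$ theta on top of the partition attached to $\Theta_{2(n-a)}^{(r)}$ and then collapses.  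The generic case $r>2n$ is parallel, with the full unipotent radical $N_{2n}$ replacing $L_{2n,a}$: the multi-residue defining $\Theta_{2n}^{(r)}$ yields a non-vanishing Whittaker functional, as in the analogous argument for theta on high-degree covers of $GL_n$ in \cite{K-P}.

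The main obstacle will be the upper bound, specifically the elimination of symplectic partitions that dominate the $Sp$ collapse of $(r^{a(n,r)}b(n,r))$ without dominating $(r^{a(n,r)}b(n,r))$ itself.  These incomparable partitions require case-by-case analysis keyed to the parities of $a(n,r)$ and $b(n,r)$, and the argument must interact carefully with the metaplectic cocycle, since the required vanishing has to hold uniformly in the unipotent character.  The paper's final section promises to prove the conjecture unconditionally in the first new case $n=r=3$, where $(r^{a(n,r)}b(n,r))=(3,3)$ is already symplectic and only a short list of dominating partitions must be eliminated, making the problem much more tractable than in general.
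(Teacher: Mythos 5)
The statement you are proving is labelled a \emph{conjecture} in the paper, and the paper does not prove it in general: it is deferred to ongoing work (the reference \cite{S1}), and the only cases established are $(r,n)=(3,1)$ and $(3,2)$ (cited to \cite{K-P} and \cite{F-G}) and the new case $(r,n)=(3,3)$, which is Proposition~\ref{conjprop1} in the final section. So there is no general argument in the paper to compare yours against, and your text is likewise a strategy outline rather than a proof: the two decisive steps are asserted with ``should'' rather than carried out.

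Concretely, the gaps are these. For the upper bound, the claim that a root-exchange reduction leads to a ``compatibility condition'' between $\chi_{Sp_{2n}^{(r)},\Theta}$ and the character attached to a larger orbit, which a torus computation then precludes, is not something one can take for granted: even in the single case $(3,3)$ the paper's elimination of the orbits $(6)$ and $(42)$ is not a character incompatibility on the torus but a genuinely local argument --- a reduction of $(42)$-coefficients of split type to the Jacquet module $J_{RN,\psi_R\psi_\alpha}(\Theta_6^{(3)})$ at unramified places, followed by Frobenius reciprocity and a proof that the induced character $\chi$ must be trivial. Nothing in your sketch produces such an argument for general $(r,n)$, and you correctly note but do not resolve the problem of orbits incomparable to the $Sp$ collapse. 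For the lower bound, Proposition~\ref{propconstant} factorizes the \emph{constant term} along $L_{2n,a}$; it does not let you ``stack'' a $GL_a$-row on top of the orbit of $\Theta_{2(n-a)}^{(r)}$ to produce a nonvanishing coefficient for the collapsed partition, because Fourier coefficients attached to unipotent orbits of $Sp_{2n}$ do not decompose as products of a $GL_a$-coefficient and an $Sp_{2(n-a)}$-coefficient, and because the precise character within a given orbit that supports a nonzero coefficient is exactly the issue the paper isolates as Assumption~\ref{assume1} and cannot settle in general. Your plan is a reasonable heuristic consistent with the dimension-equation motivation in the paper, but as written it does not constitute a proof of the conjecture.
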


The conjecture is studied in \cite{S1}.  We shall write Conjecture $1_{r,n}$ for this statement for a specific pair $(r,n)$.
We remark that an extension to even degree covers has not been formulated at this time.

To give a consequence of this conjecture, let $r$ be odd, let $a$ be an integer with $1\leq a\leq n-(r-1)/2$,
and let $b=n-(r-1)/2-a$. 
Let $U_{2n,n-b}$ be the unipotent radical of the standard parabolic subgroup of $Sp_{2n}$ whose Levi part is $GL_1^{n-b}\times Sp_{2b}$.  (In particular, $U_{2n,n}$ is the standard maximal unipotent subgroup of $Sp_{2n}$.)
Let $U_{2n,n-b,1}$ be the subgroup of $U_{2n,n-b}$ which consists of all matrices $u=(u_{i,j})\in U_{2n,n-b}$ such that $u_{n-b,i}=0$ for all $n-b+1\le i\le n$. Let $\underline{\alpha}=(\alpha_1,\ldots,\alpha_{a-1})$ where for all $i$ we have $\alpha_i\in\{0,1\}$.  Let $\psi$ be a nontrivial character of $F\backslash {\A}$,
which will be fixed throughout this paper.
Let
$\psi_{U_{2n,n-b,1},\underline{\alpha}}$ be the character of $U_{2n,n-b,1}$ given by
$$\psi_{U_{2n,n-b,1},\underline{\alpha}}(u)=\psi \left(\sum_{i=1}^{a-1} \alpha_i u_{i,i+1}+\sum_{j=a}^{n-b} u_{j,j+1}\right).$$
Then we have the following result, which will be used later.

\begin{lemma}\label{lemconstant1}  Suppose $r$ is odd, $r<2n$, and Conjecture~\ref{conj1}$_{r,n-i_0}$ holds for all $0\leq
i_0\leq a-1$.  Then
the integral
\begin{equation}\label{desc6}
\int\limits_{U_{2n,n-b,1}(F)\backslash U_{2n,n-b,1}({\A}) }
\theta_{2n}^{(r)}(ug)\,\psi_{U_{2n,n-b,1},\underline
{\alpha}}(u)\,du
\end{equation}
is zero for all choices of data, that is, for all $\theta_{2n}^{(r)}\in\Theta_{2n}^{(r)}$.
\end{lemma}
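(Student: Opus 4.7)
The plan is to prove Lemma~\ref{lemconstant1} by induction on $a$, at each step reducing to a related integral on either the same or a lower-rank theta representation whose vanishing follows from an instance of Conjecture~\ref{conj1}.

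For the base case $a=1$, the tuple $\underline{\alpha}$ is empty and the character in \eqref{desc6} is $\psi\bigl(\sum_{j=1}^{n-b} u_{j,j+1}\bigr)$ on $U_{2n,n-b,1}$. The strategy is to apply a sequence of root-exchange manipulations in the spirit of \cite{G-R-S1}, transporting the integration from $U_{2n,n-b,1}$ into a unipotent subgroup attached to a larger nilpotent orbit, by exchanging trivially-integrated root subgroups in $U_{2n,n-b,1}$ for the root subgroups $u_{n-b,i}$ (for $n-b+1\le i\le n$) that were set to zero in the definition. After such a sequence of exchanges, the integral is realized as a Fourier coefficient attached to a nilpotent orbit whose partition is not dominated by the $Sp$ collapse of $(r^{a(n,r)}b(n,r))$, and Conjecture~\ref{conj1}$_{r,n}$ then forces it to vanish.

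For the inductive step $a>1$, I would split according to whether $\alpha_1=0$ or $\alpha_1=1$. When $\alpha_1=0$, the character is trivial on every root subgroup $\{u_{1,j}\}_{j\ge 2}$ inside $U_{2n,n-b,1}$, so root exchanges allow one to pull the full constant-term integration along $L_{2n,1}$ inside the outer integration. Proposition~\ref{propconstant} applied to the parabolic $P_{2n,1}$ then replaces $\theta_{2n}^{(r)}$ by a product $\chi_{Sp_{2n}^{(r)},\Theta}(\cdot)\,\theta_{GL_1}^{(r)}(\cdot)\,\theta_{2(n-1)}^{(r)}(\cdot)$ and leaves a remaining integral of the shape \eqref{desc6} for $\Theta_{2(n-1)}^{(r)}$ with parameters $(a-1,\, b,\, (\alpha_2,\ldots,\alpha_{a-1}))$; the inductive hypothesis, which invokes Conjecture~\ref{conj1}$_{r,\,n-i_0}$ for $i_0=1,\ldots,a-1$, closes this case. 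When $\alpha_1=1$, a variant of the base-case root-exchange argument applies: after appropriate manipulations the integral becomes a Fourier coefficient attached to an orbit again not dominated by ${\mathcal O}(\Theta_{2n}^{(r)})$, and vanishes by Conjecture~\ref{conj1}$_{r,n}$ directly.

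The main obstacle is the combinatorial identification of the nilpotent orbit arising after each sequence of root exchanges, and the verification that its partition is not dominated by the $Sp$ collapse of $(r^{a(n,r)}b(n,r))$. This is a delicate check depending on the precise pattern of $\alpha_i$ values and on the interaction of the outer $GL$-type roots with the $Sp_{2b}$ block. The choice of $U_{2n,n-b,1}$, with its extra vanishing condition $u_{n-b, i} = 0$, is engineered precisely so that the needed root exchanges are boundary-free and the resulting Fourier coefficients can be read off cleanly.
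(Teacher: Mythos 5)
Your treatment of two of the three configurations of $\underline{\alpha}$ is essentially the paper's argument: when every $\alpha_i=1$ the integral \eqref{desc6} is the nondegenerate coefficient attached to the orbit $((2n-2b)1^{2b})=((r+2a-1)1^{2b})$, which is greater than or not related to ${\mathcal O}(\Theta_{2n}^{(r)})$ by Conjecture~\ref{conj1}$_{r,n}$; and when a zero occurs at the front, the integral factors through a constant term along an $L_{2n,\bullet}$, so Proposition~\ref{propconstant} pushes the problem down to a lower-rank theta representation, exactly as in \eqref{desc7}.

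However, your inductive step has a genuine gap in the branch $\alpha_1=1$. That branch must also cover tuples such as $\underline{\alpha}=(1,0,1,\dots,1)$, i.e.\ a $1$ occurring before a $0$. For such $\underline{\alpha}$ the character $\psi_{U_{2n,n-b,1},\underline{\alpha}}$ is broken in the middle of the $GL$-string, and the resulting Fourier coefficient is a degenerate one, attached to a strictly smaller unipotent orbit than $((2n-2b)1^{2b})$; root exchanges inside $U_{2n,n-b,1}$ cannot convert a broken character string into an unbroken one. So the claim that the integral ``becomes a Fourier coefficient attached to an orbit not dominated by ${\mathcal O}(\Theta_{2n}^{(r)})$'' and hence vanishes by Conjecture~\ref{conj1}$_{r,n}$ alone is unjustified and false in general --- degenerate coefficients of $\Theta_{2n}^{(r)}$ (the constant term being the extreme example) are certainly nonzero. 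This is precisely why the Lemma's hypothesis demands Conjecture~\ref{conj1}$_{r,n-i_0}$ for all $0\le i_0\le a-1$ rather than only for $i_0=0$. The paper's resolution is to reduce at the \emph{largest} index $i_0$ with $\alpha_{i_0}=0$, not the first: then the tail of the character beyond position $i_0$ is nondegenerate, the integral factors through the constant term along $L_{2n,i_0}$, Proposition~\ref{propconstant} replaces $\theta_{2n}^{(r)}$ by $\theta_{2(n-i_0)}^{(r)}$, and the remaining coefficient is the nondegenerate one attached to $((2(n-i_0-b))1^{2b})$ with $2(n-i_0-b)=r-1+2(a-i_0)>r$, which is killed by Conjecture~\ref{conj1}$_{r,n-i_0}$. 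You should restructure your induction so that the reduction happens at the last zero of $\underline{\alpha}$; as written, the mixed tuples fall through.
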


\begin{proof}
Consider first the case where $\alpha_i=1$ for all $i$. Then the Fourier coefficient \eqref{desc6} is the Fourier coefficient which corresponds to the unipotent orbit $((2n-2b)1^{2b})$. From Conjecture~\ref{conj1}$_{r,n}$ we 
have that ${\mathcal O}(\Theta_{2n}^{(r)})$ consists of the partition which is the $Sp$ collapse of the partition $(r^{a(n,r)}b(n,r))$. Hence $((2n-2b)1^{2b})$ is greater than or not related to ${\mathcal O}(\Theta_{2n}^{(r)})$. Indeed, this follows from the relation $2b=2n-2a-r+1$ which implies that $((2n-2b)1^{2b})=((r+2a-1)1^{2b})$. Thus, if $\alpha_i=1$ for all $i$, then the integral \eqref{desc6} is zero for all choices of data.

Next assume that at least one of the scalars $\alpha_i$ is zero. Let $i_0\le a-1$ be the largest index such that $\alpha_{i_0}=0$. Then the integral \eqref{desc6} is equal to
\begin{multline}\label{desc7}
\int\limits_{H(F)\backslash H({\A})}
\int\limits_{L_{2n,i_0}(F)\backslash L_{2n,i_0} ({\A})}
\int\limits_{U_{2(n-i_0),n-i_0-b,1}(F)\backslash U_{2(n-i_0),n-i_0-b,1}({\A}) }\\
\theta_{2n}^{(r)}(luhg)\,\psi_{U_{2(n-i_0) ,n-i_0-b,1}}(u)\,\psi_{H,\underline{\alpha}}(h)\,du\,dl\,dh.
\end{multline}
Here $H$ is a certain unipotent subgroup of $GL_{i_0}$ which will not be important to us.
Notice that the integration over $L_{2n,i_0}(F)\backslash L_{2n,i_0} ({\A})$ is the constant term along this unipotent group. Therefore, it follows from Proposition~\ref{propconstant} above
that the integration along the quotient space $U_{2(n-i_0),n-i_0-b,1}(F)\backslash U_{2(n-i_0),n-i_0-b,1}({\A})$ is the coefficient of the representation $\Theta_{2(n-i_0)}^{(r)}$ corresponding to the unipotent orbit $((2(n-i_0-b))1^{2b})$. Thus the vanishing of the integral \eqref{desc7} will follow if this unipotent orbit is greater than or not related to the 
orbit ${\mathcal O}(\Theta_{2(n-i_0)}^{(r)})$. From Conjecture~\ref{conj1}$_{r,n-i_0}$, this is the case if  $2(n-i_0-b)>r$. Since this 
inequality follows from the relations $2(n-a-b)=r-1$ and $a>i_0$, the result is proved.
\end{proof}

\section{The Descent Construction}\label{descent}
Recall that $U_{2m,k}$ denotes the unipotent radical of the standard parabolic subgroup of 
$Sp_{2m}$ whose Levi part  is $GL_1^k\times Sp_{2(m-k)}$. The quotient group $U_{2m,k-1}\backslash U_{2m,k}$ may be identified with the Heisenberg group in $2(m-k)+1$ variables, ${\mathcal H}_{2(m-k)+1}$. Indeed, there is a homomorphism $l : U_{2m,k}\mapsto {\mathcal H}_{2(m-k)+1}$ which is onto, and whose kernel is the group $U_{2m,k-1}$.

For the rest of this section we suppose that $r>1$ is odd. Set $r'=(r-1)/2$.
For $\alpha\in F^*$  let $\psi_{U,\alpha}$ be the character
of $U_{r-1,r'}(F)\backslash U_{r-1,r'}({\A})$ given by
$$\psi_{U,\alpha}(u)=\psi(u_{1,2}+u_{2,3}+\cdots +u_{r'-1,r'}+\alpha u_{r',r'+1})\qquad u=(u_{i,j}).$$ 

From now on we shall suppose that Conjecture~\ref{conj1}$_{r,r'}$ holds, that is, that
the representation $\Theta_{r-1}^{(r)}$ is generic.   This is known if $r=3$ \cite{K-P}.
Thus there is an $\alpha\in F^*$ and a choice of data, i.e.\ an automorphic function $\theta_{r-1}^{(r)}$ in the space of $\Theta_{r-1}^{(r)}$, such that the integral
\begin{equation}\label{wh1}
\int\limits_{U_{r-1,r'}(F)\backslash U_{r-1,r'}({\A})}\theta_{r-1}^{(r)}(ug) \,\psi_{U,\alpha}(u)\,du
\end{equation}
is not zero.
Replacing $\psi$ by $\psi^{\alpha^{-1}}$ and changing the function, we may assume that $\alpha=1$. In other words,
there is a choice of data $\theta_{r-1}^{(r)}$ such that the integral 
\begin{equation}\label{wh11}
\int\limits_{U_{r-1,r'}(F)\backslash U_{r-1,r'}({\A})}\theta_{r-1}^{(r)}(ug) \,\psi_{U}(u)\,du
\end{equation}
is not zero, where $\psi_U$ is defined as $\psi_{U,\alpha}$ with $\alpha=1$.

{\bf Remark:} {\sl In fact we expect that for all $\alpha\in F^*$, the integral \eqref{wh1} will be nonzero for some choice of data.}

Let $\Theta_{2m}^{\psi,\phi}$ denote the theta representation attached to the Weil representation and defined on the double cover of $Sp_{2m}({\A})$. Here $\phi$ is a Schwartz function on ${\A}^m$.  
Since $r$ is odd, using the isomorphism
$\mu_{2r}\cong \mu_2\times \mu_r$ we may map $g\in Sp_{2n-r+1}^{(2r)}({\A})$ to its image in the double and $r$-fold covers;
we will not introduce separate notation for this.
 To define the descent construction, consider the function of $g\in Sp_{2n-r+1}^{(2r)}({\A})$ given by
\begin{equation}\label{desc1}
f(g)=\int\limits_{U_{2n,r'}(F)\backslash U_{2n,r'}({\A}) }\theta_{2n-r+1}^{\psi,\phi}(l(u)g)\,\theta_{2n}^{(r)}
(ug) \,\psi_{U_{2n,r'}}(u)\,du.
\end{equation}
Here $\theta_{2n-r+1}^{\psi,\phi}$ and $\theta_{2n}^{(r)}$ are vectors in the spaces of the representations
$\Theta_{2n-r+1}^{\psi,\phi}$ and $\Theta_{2n}^{(r)}$ (resp.), $\psi_{U_{2n,r'}}$ is the character of $U_{2n,r'}({\A})$ given by $\psi_{U_{2n,r'}}(u_{i,j})=\psi(u_{1,2}+u_{2,3}+\cdots +u_{r'-1,r'})$, and $g\in Sp_{2n-r+1}$ is embedded in $Sp_{2n}$ as
$$g\mapsto \begin{pmatrix} I_{r'}&&\\ &g&\\ &&I_{r'}\end{pmatrix}.$$ 
(One could also consider descent integrals similar to \eqref{desc1} if $r$ is even but we shall not do so here.)
Then $f(g)$ is a genuine automorphic function defined on $Sp_{2n-r+1}^{(2r)}({\A})$.   
Let $\sigma_{2n-r+1}^{(2r)}$ denote the representation of  $Sp_{2n-r+1}^{(2r)}({\A})$ generated by all the functions $f(g)$.  

\section{Computation of the  Constant Term of the Descent Integral}
Let $V$ denote any standard unipotent subgroup of $Sp_{2n-r+1}$, and let $\psi_V$ 
be a character of $V(F)\backslash V({\A})$, possibly trivial.
In this paper we will compute integrals of the type
\begin{equation}\label{uni1}
\int\limits_{V(F)\backslash V({\A})}f(vg)\,\psi_V(v)\,dv,
\end{equation}
where $f(g)$ is given by \eqref{desc1}.

Using \eqref{desc1}, we arrive at an iterated integral.  We first unfold the theta function $\theta_{2n-r+1}^{\psi,\phi}$. Collapsing summation and integration, and then using the formulas for the action of the Weil representation
$\omega_\psi$, the integral \eqref{uni1} is equal to
\begin{equation}\label{desc2}
\int\limits_{{\A}^{n-r'}}\int\limits_{V(F)\backslash V({\A})}
\int\limits_{U_{2n,r',1}(F)\backslash U_{2n,r',1}({\A}) }
\omega_\psi(g)\phi(x)\,\theta_{2n}^{(r)}(uvj(x)g)\,
\psi_{U_{2n,r',1}}(u)\,\psi_V(v)\,du\,dv\,dx.
\end{equation}
Here $x=(x_1,\ldots, x_{n-r'})\in \A^{n-r'}$ is embedded in $Sp_{2n}$  via the map
 $j(x)=I_{2n}+x_1e^*_{r',r'+1}+x_2e^*_{r',r'+2}+\cdots +x_{n-r'}e^*_{r',n}$,
where $e_{i,j}$ denotes the $(i,j)$-th elementary matrix and $e^*_{i,j}=e_{i,j}-e_{2n-j+1,2n-i}$. 
Recall that the group $U_{2n,r',1}$ is the subgroup of $U_{2n,r'}$ consisting of all matrices  $u=(u_{i,j})\in U_{2n,r'}$ such that $u_{r',k}=0$ for all $r'+1\le k\le n$. The character $\psi_{U_{2n,r',1}}(u)$ is defined as the product of $\psi_{U_{2n,r'}}(u)$ and the character $\psi^0_{U_{2n,r',1}}$ of $U_{2n,r',1}$  given by $\psi^0_{U_{2n,r',1}}(u)=\psi(u_{r',2n-r'+1})$.

At this point we consider the case when $V=L_{2n-r+1,a}$ where $1\le a\le n-r'$ and $\psi_V$ is the trivial character. Thus, 
$V$ is the unipotent radical of the standard maximal parabolic subgroup of $Sp_{2n-r+1}$ whose 
Levi part is $GL_a\times Sp_{2n-2a-r+1}$. We recall that $2b=2n-2a-r+1$. 

Let $w_a$ denote the Weyl group element of $Sp_{2n}$ defined by
$$w_a=\begin{pmatrix} &I_a&&&\\ I_{n-a-b}&&&&\\ &&I_{2b}&&\\ &&&&I_{n-a-b}\\ &&&I_a& \end{pmatrix}.$$ Since $\theta_{2n}^{(r)}$ is left-invariant under rational points, we may conjugate by $w_a$ from left to right. 
Doing so, we see that the integral \eqref{desc2} is equal to
\begin{multline}\label{desc3}
\int\limits_{{\A}^{n-r'}}\int\limits_{Mat_{(n-a-b-1)\times a}(F)
\backslash Mat_{(n-a-b-1)\times a}({\A})}
\int\limits_{V_0(F)\backslash V_0({\A})}
\int\limits_{U_0(F)\backslash U_0({\A}) }\\
\omega_\psi(g)\phi(x)\,\theta_{2n}^{(r)}
(u_0v_0k(y)w_aj(x)g)\,\psi_{V_0}(v_0)\,du_0\,dv_o\,dy\,dx.
\end{multline}
The notations are defined as follows. Recall that $L_{2n,a}$ denotes the unipotent radical of the standard maximal parabolic subgroup of $Sp_{2n}$ whose Levi part is $GL_a\times Sp_{2(n-a)}$. The group $U_0$ is defined to be the subgroup of $L_{2n,a}$ consisting of all matrices in $Sp_{2n}$ of the form
$$\begin{pmatrix} I_a&0&*&*&*\\ &I_{n-a-b}&0&0&*\\
&&I_{2b}&0&*\\ &&&I_{n-a-b}&0\\ &&&&I_a\end{pmatrix}.$$
The embedding of $Mat_{(n-a-b-1)\times a}$ into $Sp_{2n}$ is given by
$$k(y)=\begin{pmatrix} I_a&&&&\\ y&I_{n-a-b}&&&\\ &&I_{2b}&&\\ &&&I_{n-a-b}&\\ &&&y^*&I_a\end{pmatrix},$$ 
where $y^*$ is chosen to make the matrix symplectic. Here the embedding of $y$ is such that it consists of all matrices of size $(n-a-b)\times a$ such that its bottom row is zero. Finally $V_0$ is equal to the group $U_{2(n-a),n-a-b,1}$, defined similarly to the definition of $U_{2n,a,1}$ reviewed after \eqref{desc2} (in particular, the group $V_0$ is a subgroup of $Sp_{2(n-a)}$), and 
the character $\psi_{V_0}:=\psi_{U_{2(n-a),n-a-b},1}$ as defined there. In coordinates, 
$\psi_{V_0}(v)=\psi(v_{a+1,a+2}+v_{a+2,a+3}+\cdots +v_{ n-b-1,n-b}+v_{n-b,n+b+1})$.

The next step is to carry out some root exchanges. The notion of {\sl root exchange} was described in detail in the authors' 
paper \cite{F-G}, Section 2.2; a more abstract formulation may be found in Ginzburg, Rallis and Soudry
 \cite{G-R-S3}, Section 7.1.
In our context, we perform the following root exchange.  For $z\in Mat_{a\times(n-a-b)}$, let $m(z)$ in $Sp_{2n}$ be given by
$$m(z)=\begin{pmatrix} I_a&z&&&\\ &I_{n-a-b}&&&\\ &&I_{2b}&&\\ &&&I_{n-a-b}&z^*\\ &&&&I_a\end{pmatrix}.$$  
We embed the group $Mat_{a\times (n-a-b-1)}(\A)$ inside $Sp_{2n}(\A)$ by considering all matrices $m(z)$ as above such that the first column of $z$ is zero. Expand the integral \eqref{desc3} along the group so obtained. Performing the root exchange with the group of all matrices $k(y)$ such that $y\in Mat_{(n-a-b-1)\times a}({\A})$, one sees that the integral \eqref{desc3} is equal to
\begin{multline}\label{desc4}
\int\limits_{{\A}^{n-r'}}\int\limits_{
Mat_{(n-a-b-1)\times a}({\A})}
\int\limits_{V_0(F)\backslash V_0({\A})}
\int\limits_{L^0_{2n,a}(F)\backslash L^0_{2n,a}({\A}) }
\\
\omega_\psi(g)\phi(x)\,\theta_{2n}^{(r)}(uv_0k(y)w_aj(x)g)\,
\psi_{V_0}(v_0)\,du\,dv_0\,dy\,dx.
\end{multline}
Here $L^0_{2n,a}$ is the subgroup of $L_{2n,a}$ which consists of all matrices $u=(u_{i,j})\in L_{2n,a}$ such that $u_{i,a+1}=0$ for all $1\le i\le a$.

Consider the quotient space of $L^0_{2n,a}\backslash L_{2n,a}$. 
This quotient may be naturally identified with the column vectors of size $a$. This group embeds in $Sp_{2n}$  as the group of all matrices of the form $I_{2n}+z_1e^*_{1,a+1}+z_2e^*_{2,a+1}+\cdots + z_ae^*_{a,a+1}$. Expand the integral \eqref{desc4} along this group. The group $GL_a(F)$  acts on this expansion with two orbits.

The contribution of the nontrivial orbit  to the integral \eqref{desc4} is the expression
\begin{multline}\label{desc5}
\int\limits_{{\A}^{n-r'}}\int\limits_{
Mat_{(n-a-b-1)\times a}({\A})}
\int\limits_{V_0(F)\backslash V_0({\A})}
\int\limits_{L_{2n,a}(F)\backslash L_{2n,a}({\A}) }
\\ \omega_\psi(g)\phi(x)\,\theta_{2n}^{(r)}
(uv_0k(y)w_aj(x)g)\,
\psi_{V_0}(v_0)\,\psi_{L_{2n,a}}(u)\,du\,dv_0\,dy\,dx,
\end{multline}
where for $u=(u_{i,j})\in L_{2n,a}(F)\backslash L_{2n,a}({\A})$,  $\psi_{L_{2n,a}}(u)=\psi(u_{a,a+1})$.
We claim that the integral \eqref{desc5} is zero for all choices of data. 

To prove this,  we carry out similar expansions repeatedly.  To start, consider the quotient space $L_{2n,a-1}\backslash L_{2n,a}$. This space may be identified with the column vectors of size $a-1$, embedded in $Sp_{2n}$ as the group of all matrices of the form $I_{2n}+z_1e^*_{1,a}+z_2e^*_{2,a}+\cdots + z_{a-1}e^*_{a-1,a}$. Expand the integral \eqref{desc5} along this group. The group $GL_{a-1}(F)$ acts on this expansion with two orbits. If $a-1>1$ we continue in this way.   Then the vanishing
of the integral \eqref{desc5} for all choices of data then follows from  Lemma~\ref{lemconstant1} above.

We conclude that the integral \eqref{uni1} with $V=U_{2n-r+1,a}$ and $\psi_V=1$ is equal to
\begin{multline}\label{desc8}
\int\limits_{{\A}^{n-r'}}\int\limits_{
Mat_{(n-a-b-1)\times a}({\A})}
\int\limits_{V_0(F)\backslash V_0({\A})}
\int\limits_{L_{2n,a}(F)\backslash L_{2n,a}({\A}) }
\\
\omega_\psi(g)\phi(x)\,\theta_{2n}^{(r)}
(uv_0k(y)w_aj(x)g) \,\psi_{V_0}(v_0)\,du\,dv_0\,dy\,dx.
\end{multline}

We shall prove momentarily that \eqref{desc8} is nonzero for some choice of data.  Supposing this for the moment,
let us analyze this integral when $g=\text{diag}(tI_a,I_{2n-r-2a+1},t^{-1}I_a)$ is an element in the center of the Levi part of the maximal parabolic subgroup of $Sp_{2n-r+1}$ whose Levi part is $GL_a\times Sp_{2n-r-2a+1}$, embedded in the $r$-fold metaplectic group via the trivial section. Suppose also that $t$ is an $r$-th power. Inserting $g$ into \eqref{desc8}, we
move it from right to left in the function $\theta_{2n}^{(r)}$.  Moving it past $j(x)$ and making the corresponding variable change in $x$ gives a factor of $|t|^{-a}$,
as well as a factor of $\gamma(t^a)|t|^{a/2}$ from the action of the Weil representation.  Then moving it past $k(y)$ we obtain a factor of $|t|^{-a(n-a-b-1)}$ from the corresponding change of variables in $y$. Clearly, $g$ commutes with $V_0$, and finally, applying Proposition \ref{propconstant} above we also get a factor of
$$\delta_{P_{2n,a}}^{\frac{r-1}{2r}}(\text{diag} (tI_a,I_{2(n-a)},t^{-1}I_a))=|t|^{a(r-1) (2n-a+1)/2r}$$
(here, recall that $P_{2n,a}$ is the maximal parabolic subgroup of $Sp_{2n}$ whose unipotent radical is $L_{2n,a}$).

We conclude that, for $g=\text{diag} (tI_a,I_{2(n-a)},t^{-1}I_a)$ as above and for $V=U_{2n-r+1,a}$, we have
\begin{equation*}\label{desc9}
\int\limits_{V(F)\backslash V({\A})}f(vg)\,dv=|t|^{\beta}\int\limits_{V(F)\backslash V({\A})}f(v)\,dv
\end{equation*}
where
$$\beta=\frac{a(r-1) (2n-a+1)}{2r}-\frac{a}{2}-a(n-a-b-1).$$
It follows from Section~\ref{def}  that $\chi_{Sp_{2n-r+1}^{(2r)},\Theta}(g)=|t|^\beta$. 

Finally, we prove that the integral \eqref{desc8} is not zero for some choice of data. This will
also imply that the representation $\sigma_{2n-r+1}^{(2r)}$ is not zero.
Suppose instead that \eqref{desc8} is zero for all choices of data. Arguing as in \cite{F-G}, Lemma 1, we may ignore the integration over the $x$ and $y$ variables, and deduce that the integral
\begin{equation}\label{desc10}
\int\limits_{V_0(F)\backslash V_0({\A})}
\int\limits_{L_{2n,a}(F)\backslash L_{2n,a}({\A}) \,}\theta_{2n}^{(r)}
(uv_0) \,\psi_{V_0}(v_0)\,du\, dv_0\notag
\end{equation}
is zero for all choices of data. From Proposition~\ref{propconstant} we deduce that the integral
\begin{equation}\label{desc11}
\int\limits_{V_0(F)\backslash V_0({\A})}
\theta_{2(n-a)}^{(r)}
(v_0h)\, \psi_{V_0}(v_0)\,dv_0
\end{equation}
is zero for all choice of data. Here $h\in Sp_{2(n-a)}^{(r)}({\A})$. From the definition of the group $V_0$ and the character $\psi_{V_0}$ it follows that the above Fourier coefficient corresponds to the unipotent orbit $((2(n-a-b))1^{2b})$. Since $2b=2n-2a-r+1$, this unipotent orbit is the same as $((r-1)1^{2n-2a-r+1})=((r-1)1^{2b})$. 

At this point we would like to use instances of Conjecture \ref{conj1}, but we need to be careful. The Fourier coefficients attached to the orbit $((r-1)1^{2b})$ are given by the integrals
\begin{equation}\label{desc12}
\int\limits_{V_0(F)\backslash V_0({\A})}
\theta_{2b+r-1}^{(r)}
(v_0h) \,\psi_{V_0,\alpha}(v_0)\,dv_0
\end{equation}
with $\alpha\in F^*$ and  $\psi_{V_0,\alpha}$ defined on $V_0\subset Sp_{2(n-a)}$ by
$$\psi_{V_0,\alpha}(v)=\psi(v_{1,2}+v_{2,3}+\cdots
+v_{r'-1,r'}+\alpha v_{r',2b+r'+1}).$$ 
Conjecture \ref{conj1} asserts that there is an $\alpha$ such that the integral \eqref{desc12} is not zero for some choice of data. However, we need to prove that the integral \eqref{desc11} is not zero for some choice of data, that is, the integral \eqref{desc12} with $\alpha=1$. This does not follow from Conjecture~\ref{conj1}.

To prove that integral \eqref{desc11} is not zero for some choice of data, we will use the non-vanishing of the integral \eqref{wh11}. Indeed, suppose instead that \eqref{desc11} is zero for all choices of data. Then the integral
\begin{equation}\label{desc13}
\int\limits_{L_{2b,b}(F)\backslash L_{2b,b}({\A})}
\int\limits_{V_0(F)\backslash V_0({\A})}
\theta_{2b+r-1}^{(r)}
(v_0l) \,\psi_{V_0}(v_0)\,dv_0\,dl
\end{equation}
is zero for all choices of data. We recall that $L_{2b,b}$ is the unipotent radical of 
the maximal parabolic subgroup of $Sp_{2b}$ whose Levi part is $GL_b$. Arguing as in the computation of  \eqref{desc2}, we obtain that the vanishing of \eqref{desc13} implies the vanishing of the integral
\begin{equation}\label{desc14}
\int\limits_{U_{r-1,r'}(F)\backslash U_{r-1,r'}({\A})}
\int\limits_{L_{2b+r-1,b}(F)\backslash L_{2b+r-1,b}({\A})}
\theta_{2b+r-1}^{(r)}(lv_0)\, \psi_{V_0}(v_0)\,dl\,dv_0.
\end{equation}
But the vanishing of  \eqref{desc14} for all choice of data implies that \eqref{wh11} is zero for all choice of data,
by Proposition~\ref{propconstant}. This is a contradiction.

Since the constant terms have been bounded and the descent is a compact integral of an automorphic form, the descent
and its derivatives have uniformly moderate growth.  Then from truncation as in the Corollary in \cite{M-W}, I.2.12, together with
the above calculations, one
obtains the following result. 

\begin{proposition}\label{l2}  Let $r$ be odd, $r<2n$, and suppose that Conjecture 1$_{r,i}$ holds for all $i$ with $r'\leq i\leq n$.  
Then the representation $\sigma_{2n-r+1}^{(2r)}$ is not zero and it is a sub-representation of $L^2(Sp_{2n-r+1}^{(2r)}(F)\backslash
Sp_{2n-r+1}^{(2r)}({\mathbb{A}}))$. Moreover its projection to the residual spectrum is
nonzero.
\end{proposition}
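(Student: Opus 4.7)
The plan is to assemble three pieces; most of the work has in fact been carried out in the analysis that immediately precedes the statement, and what remains is to package it correctly.

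First, non-vanishing of $\sigma_{2n-r+1}^{(2r)}$ is essentially already established. Taking $V$ trivial in \eqref{uni1} reduces the question to the non-vanishing of \eqref{desc8} for some choice of data. The final chain of implications given just before the proposition, using Proposition~\ref{propconstant} and Lemma~\ref{lemconstant1} to collapse \eqref{desc11}, \eqref{desc13}, \eqref{desc14} down to \eqref{wh11}, contradicts the hypothesis that \eqref{wh11} is non-zero (which is Conjecture~$1_{r,r'}$). Hence $f$ is not identically zero. The additional instances Conjecture~$1_{r,i}$ for $r'<i\le n$ enter when invoking Lemma~\ref{lemconstant1} inside the iterated root-exchange expansions.

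Second, to prove $f\in L^2$, I would apply Jacquet's square-integrability criterion in the form used by Mœglin--Waldspurger \cite{M-W}, I.2.12. Because $\theta_{2n}^{(r)}$ and $\theta_{2n-r+1}^{\psi,\phi}$ are automorphic forms of moderate growth, and \eqref{desc1} integrates them against a character over a compact quotient, the function $f$ and all its derivatives have uniformly moderate growth. It then suffices to show that for every standard parabolic $Q=MN$ of $Sp_{2n-r+1}$, the central exponents of the constant term $f_N$ along $N$ satisfy the Langlands dominance condition. The computation presented above, carried out for $Q=P_{2n-r+1,a}$ at $g=\text{diag}(tI_a,I_{2n-r-2a+1},t^{-1}I_a)$, yields the central character $\chi_{Sp_{2n-r+1}^{(2r)},\Theta}(g)=|t|^{\beta}$. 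The same sequence of root exchanges and invocations of Proposition~\ref{propconstant} applies verbatim to every standard parabolic, producing in each case a central character that matches the one attached to $\Theta_{2n-r+1}^{(2r)}$ in Section~\ref{def}. Since $\Theta_{2n-r+1}^{(2r)}$ is itself a residue of a Borel Eisenstein series at the extremal pole identified there, its exponents lie in the negative Weyl chamber, so the same holds for $f$, and the $L^2$ conclusion follows.

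Third, the non-vanishing of the constant term \eqref{desc8} along the maximal parabolic $P_{2n-r+1,a}$ shows that $f$ is not cuspidal; combined with square-integrability this forces a nontrivial projection onto the residual spectrum. The principal technical obstacle is the exponent verification for non-maximal parabolics: one must iterate the root-exchange and expansion machinery of Section~\ref{descent} uniformly across all standard parabolics and confirm at each step that the emerging central character coincides with the prediction of Section~\ref{def}, so that the dominance condition is inherited from the known residual nature of $\Theta_{2n-r+1}^{(2r)}$.
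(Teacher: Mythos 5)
Your proposal is correct and follows essentially the same route as the paper: non-vanishing via the chain of contradictions terminating in the non-vanishing of \eqref{wh11}, square-integrability via the constant-term exponent computation combined with the truncation argument of \cite{M-W}, I.2.12, and the residual projection from the non-vanishing constant term matching $\chi_{Sp_{2n-r+1}^{(2r)},\Theta}$. Your explicit flagging of the need to control exponents along non-maximal parabolics is a point the paper leaves implicit, but it is the same argument.
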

\noindent
 One could also deduce this following \cite{F-G}, Propositions 4 and 5, by a sequence
of expansions along abelian unipotent radicals.  We omit the details, as in fact
we expect that a stronger statement holds (compare \cite{F-G}, Conjecture 1).

\begin{conjecture}\label{conjecture-new-strong}
The representation $\sigma_{2n-r+1}^{(2r)}$ is equal to $\Theta_{2n-r+1}^{(2r)}$.
\end{conjecture}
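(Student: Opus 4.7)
The plan is to identify $\sigma_{2n-r+1}^{(2r)}$ with $\Theta_{2n-r+1}^{(2r)}$ by matching them at the level of constant terms along all standard parabolics and then invoking a rigidity property of the residual spectrum. Proposition~\ref{l2} already puts $\sigma_{2n-r+1}^{(2r)}$ in $L^2$ with nonzero residual projection, and the computation culminating in \eqref{desc8} shows that the constant term along the unipotent radical of the maximal parabolic $P_{2n-r+1,a}$ is controlled on the Levi center by exactly the character $\chi_{Sp_{2n-r+1}^{(2r)},\Theta}$ predicted in Section~\ref{def} for the theta representation.

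The first step I would carry out is to extend the constant-term calculation of Section~\ref{descent} from maximal parabolics to every standard parabolic, ultimately to the Borel. The procedure is the same template used to reach \eqref{desc8}: unfold the Weil theta, conjugate by a suitable Weyl element adapted to $P$, perform root exchanges to consolidate the integration into a constant term of $\theta_{2n}^{(r)}$ along some $L_{2n,*}$, apply Proposition~\ref{propconstant} to replace $\theta_{2n}^{(r)}$ by a tensor product of theta representations on $GL$- and smaller $Sp$-factors, and kill the non-principal $GL$-orbits by iterated use of Lemma~\ref{lemconstant1}. For each $P$ the output should be a descent-type integral on a smaller symplectic cover whose central character, computed exactly as in the argument following \eqref{desc8}, is forced to be the restriction of $\chi_{Sp_{2n-r+1}^{(2r)},\Theta}$ to the Levi center of $P$. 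Iterating this down to the Borel shows that every Jacquet exponent of $\sigma_{2n-r+1}^{(2r)}$ is the single character $\chi_{Sp_{2n-r+1}^{(2r)},\Theta}$.

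The second step is the identification. From the first step, $\sigma_{2n-r+1}^{(2r)}$ embeds as an automorphic subrepresentation of $\Ind_{B_{2n-r+1}^{(2r)}(\A)}^{Sp_{2n-r+1}^{(2r)}(\A)}\chi_{Sp_{2n-r+1}^{(2r)},\Theta}$, and by Proposition~\ref{l2} its image in the discrete spectrum has nonzero residual component. Since $\Theta_{2n-r+1}^{(2r)}$ is by construction an irreducible residual subrepresentation of this same induced module, obtained as the rightmost multi-residue of the Borel Eisenstein series, a uniqueness statement for residues at this point will force $\sigma_{2n-r+1}^{(2r)}=\Theta_{2n-r+1}^{(2r)}$. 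To make this rigorous I would argue locally: at each place the induced principal series $\Ind_{B^{(2r)}}^{Sp^{(2r)}}\chi_{Sp_{2n-r+1}^{(2r)},\Theta}$ should have a unique irreducible constituent whose unramified or archimedean parameters match those cut out by the rightmost residue, and this constituent is the local component of $\Theta_{2n-r+1}^{(2r)}$.

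The main obstacle is precisely this last uniqueness step. Multiplicity-one and rigidity for residual representations on metaplectic covers of arbitrary degree are not nearly as well-developed as their linear counterparts (e.g., Moeglin-Waldspurger), so one cannot simply quote a black box. A plausible route is to prove it directly by analyzing the image of the long-element intertwining operator on $\Ind_{B^{(2r)}}^{Sp^{(2r)}}\chi_{Sp_{2n-r+1}^{(2r)},\Theta}$ and showing that its residue at the relevant point is an irreducible Langlands quotient, then showing that any automorphic realization with the matching exponent must be isomorphic to it. A secondary, more bookkeeping-level difficulty is that the extended constant-term computation requires controlling a growing tree of Weyl double cosets and invoking Conjecture~\ref{conj1} for additional pairs $(r,m)$; structurally these are the same manipulations as in Section~\ref{descent}, but the combinatorial verification that each non-principal orbit vanishes becomes progressively more involved.
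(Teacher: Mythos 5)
The statement you are trying to prove is stated in the paper as Conjecture~\ref{conjecture-new-strong}; the authors do not prove it, and explicitly present it as an open expectation strengthening Proposition~\ref{l2} (which only establishes that $\sigma_{2n-r+1}^{(2r)}$ is nonzero, square-integrable, and has nonzero projection to the residual spectrum). So there is no proof in the paper to compare against, and your proposal should be judged as a standalone argument. As such, it is a reasonable strategy outline but not a proof: the decisive step is missing, and you have in fact identified it yourself.

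Concretely, there are two gaps. First, in your step one, the constant term of an automorphic form along a parabolic is a priori a sum over Weyl double cosets of intertwining-type contributions, so concluding that \emph{every} Jacquet exponent of $\sigma_{2n-r+1}^{(2r)}$ is the single character $\chi_{Sp_{2n-r+1}^{(2r)},\Theta}$ requires vanishing statements for all the other cosets; the paper's computation achieves this only for the maximal parabolics $P_{2n-r+1,a}$, and only by invoking Lemma~\ref{lemconstant1}, which itself rests on unproven instances of Conjecture~\ref{conj1}. Pushing this to all standard parabolics (in particular the Borel) is not a routine repetition and would need additional orbit-vanishing inputs that are not available. Second, and more seriously, even a complete match of all constant terms and exponents does not identify two automorphic representations: you need a multiplicity-one or rigidity theorem for the relevant residual datum on the $2r$-fold cover, i.e.\ that the induced module $\Ind_{B^{(2r)}}^{Sp^{(2r)}}\chi_{Sp_{2n-r+1}^{(2r)},\Theta}$ admits a unique automorphic realization in the discrete spectrum with these exponents. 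No such theorem is known for higher covers (the M\oe glin--Waldspurger classification does not apply), and proving it is essentially of the same order of difficulty as the conjecture itself. Until that uniqueness statement is supplied, the argument establishes at best that $\sigma_{2n-r+1}^{(2r)}$ and $\Theta_{2n-r+1}^{(2r)}$ live in the same induced module and share a residual constituent --- which is what Proposition~\ref{l2} already gives.
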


\section{The Whittaker Coefficient of the Descent}

In this section we study those cases in which the descent representation $\sigma_{2n-r+1}^{(2r)}$ is generic. 
Let us explain how these cases are predicted by the 
dimension equation of the second author \cite{G2}.   It follows from this equation
together with \eqref{desc1} that the following equation should hold, where the dimension of a representation means
its Gelfand-Kirillov dimension:
\begin{equation}\label{dim1}
\text{dim}\ \Theta_{2n-r+1}^{\psi,\phi}\ + \text{dim}\ \Theta_{2n}^{(r)}=\text{dim}\ U_{2n,r'} +
\text{dim}\ \sigma_{2n-r+1}^{(2r)}.
\end{equation}
Since $\Theta_{2n-r+1}^{\psi,\phi}$ is the minimal representation of $Sp_{2n-r+1}^{(2)}({\A})$,
its dimension is $\frac{1}{2}(2n-r+1)$. An easy computation shows that $\text{dim}\ U_{2n,r'}=
\frac{1}{4}(r-1)(4n-r+1)$. If we want the representation $\sigma_{2n-r+1}^{(2r)}$ to be generic, 
its dimension should equal $\frac{1}{4}(2n-r+1)^2$. Using these values,  \eqref{dim1} gives
\begin{equation}\label{dim2}
\text{dim}\ \Theta_{2n}^{(r)}=n^2-n+\frac{1}{2}(r-1).
\end{equation}
It is not hard to check that for a representation $\Theta_{2n}^{(r)}$ which satisfies both 
equation \eqref{dim2} and Conjecture \ref{conj1}$_{r,n}$, we must have $n\le r <2n$. If $n=2k$, then 
for all $0\le i\le k-1$ the dimension of the unipotent orbit $((4k-2i-2)(2i+2))$ is equal to
$n^2-n+\frac{1}{2}(r-1)$ with $r=4k-2i-1$. If $n=2k+1$, then 
for all $0\le i\le k-1$ the dimension of the unipotent orbit $((4k-2i)(2i+2))$ is equal to
$n^2-n+\frac{1}{2}(r-1)$ with $r=4k-2i+1$. In this case we also have the unipotent orbit 
$((2k+1)^2)$ whose dimension is $n^2-n+\frac{1}{2}(r-1)$ with $r=2k+1$. The above argument motivates the following
result.
\begin{theorem}\label{th1}
Suppose that $r$ is odd, $n\le r <2n$. Suppose that Conjecture~\ref{conj1}$_{r,i}$ holds for $r'\leq i\leq n$.  If
$r\neq n, n+1$, suppose that Assumption 1 below holds as well.
Then the representation $\sigma_{2n-r+1}^{(2r)}$ is generic. Moreover, in the special case where $r=n$, the Whittaker coefficient of 
$\sigma_{n+1}^{(2n)}$ attached to factorizable data $\theta_{2n}^{(n)}\in \Theta_{2n}^{(n)}$ and $\phi$ is factorizable.
\end{theorem}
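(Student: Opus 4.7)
The plan is to parallel the constant term computation of Section~\ref{descent}, but now starting from a Whittaker integral against the standard maximal unipotent $V$ of $Sp_{2n-r+1}$ equipped with a nondegenerate character $\psi_V$. Substituting \eqref{desc1} into
\[
W(g) = \int_{V(F)\backslash V(\A)} f(vg)\,\psi_V(v)\,dv,
\]
I would unfold $\theta_{2n-r+1}^{\psi,\phi}$, collapse summation and integration, and apply the action of the Weil representation, just as in the passage from \eqref{desc1} to \eqref{desc2}. This should produce an iterated integral of the shape
\[
\int_{\A^{n-r'}}\int_{U(F)\backslash U(\A)} \omega_\psi(g)\phi(x)\,\theta_{2n}^{(r)}(u\,j(x)g)\,\psi_U(u)\,du\,dx,
\]
where $U$ is a unipotent subgroup of $Sp_{2n}$ enlarging $U_{2n,r',1}$ by the image of $V$, and $\psi_U$ is the character built from $\psi_{U_{2n,r'}}$, the extra $\psi^0$-term, and $\psi_V$.

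Next, I would perform a sequence of root exchanges in the spirit of those running from \eqref{desc3} to \eqref{desc8} in order to convert the inner integral into a standard Fourier coefficient of $\theta_{2n}^{(r)}$ along a unipotent subgroup attached to a partition $\mathcal O$ of $2n$. A careful bookkeeping of the roots and character entries should identify $\mathcal O$ with the $Sp$-collapse of $(r^{a(n,r)} b(n,r))$, i.e.\ with ${\mathcal O}(\Theta_{2n}^{(r)})$ according to Conjecture~\ref{conj1}$_{r,n}$. Then Assumption~\ref{assume1} will ensure that the specific character obtained is one of those supporting a nonzero coefficient, producing a nonzero $W(g)$ for suitable data. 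In the borderline cases $r=n$ and $r=n+1$ the dimension equation \eqref{dim2} shows there is essentially only one compatible character on the orbit, so Assumption~\ref{assume1} can be bypassed; instead one argues along the lines of the nonvanishing argument at the end of Section~\ref{descent}, reducing to the nonvanishing of \eqref{wh11} via Proposition~\ref{propconstant}.

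For the factorization statement when $r=n$, the relevant orbit reduces to $(n^2)$, and the Fourier coefficient of $\Theta_{2n}^{(n)}$ along this orbit can be evaluated by first taking the constant term along the Siegel parabolic $P_{2n,n}$. By Proposition~\ref{propconstant} this constant term is expressed through $\theta_{GL_n}^{(n)}\in\Theta_{GL_n}^{(n)}$, and the remaining inner integration produces precisely the Whittaker coefficient of $\Theta_{GL_n}^{(n)}$, which by Kazhdan--Patterson \cite{K-P} is Eulerian and expressible in terms of $n$-th order Gauss sums. The outer integration against $\omega_\psi(g)\phi(x)$ is factorizable in $\phi$ by the construction of the Weil representation, so the whole Whittaker coefficient is factorizable on factorizable inputs, which is the second claim and sets up Theorem~\ref{th2}.

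The main obstacle will be the root-exchange bookkeeping: one must verify that the character $\psi_U$ on the enlarged unipotent which emerges after unfolding is genuinely the character naturally attached to ${\mathcal O}(\Theta_{2n}^{(r)})$, and not to a strictly larger orbit (which would force vanishing by Conjecture~\ref{conj1}) or to an unrelated smaller orbit (which would break the link to genericity). Pinning down the precise form of this character is what separates the self-contained cases $r=n,n+1$ from the general case where Assumption~\ref{assume1} must be invoked.
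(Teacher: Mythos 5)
Your overall strategy coincides with the paper's: unfold the Weil-representation theta function inside the Whittaker integral of the descent, conjugate by a Weyl element and perform root exchanges so that the inner integration becomes a Fourier coefficient of $\Theta_{2n}^{(r)}$ attached to the orbit ${\mathcal O}(\Theta_{2n}^{(r)})$ (which is $((2n-2l)(2l))$ in general and $(n^2)$ when $r=n$ or $n+1$), invoke Assumption~\ref{assume1} in the general case, and obtain factorizability for $r=n$ from the constant term along the Siegel parabolic (Proposition~\ref{propconstant}) together with the uniqueness of the Whittaker functional on $\Theta_{GL_n}^{(n)}$ from Kazhdan--Patterson. Your final paragraph on factorization is essentially the paper's argument.

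However, your treatment of the borderline cases $r=n,n+1$ --- which carry the main content of the theorem --- has two concrete problems. First, the reason Assumption~\ref{assume1} can be bypassed there is not the dimension equation: it is that the orbit is $(n^2)$, and after choosing $b=-1/4$ the character $\psi_{U'_l,a,b}$ of \eqref{fc1} can be moved to a normalized one by conjugating by an explicit rational element $\gamma_0=\text{diag}(\gamma,\ldots,\gamma,\gamma^*,\ldots,\gamma^*)$, exactly as in Ginzburg--Rallis--Soudry; for the other orbits $((2n-2l)(2l))$ the parameter $a$ attached to the long row is a genuine rational invariant and no such normalization exists, which is precisely why the Assumption is needed. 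Second, the nonvanishing in the borderline cases does not reduce to \eqref{wh11} (that integral concerns $\Theta_{r-1}^{(r)}$ and enters only in the nonvanishing of the \emph{constant term} of the descent); rather, after the $\gamma_0$-conjugation, a further Weyl conjugation (by $w_0'$, resp.\ $w_0^*$), and more root exchanges, one reduces to the constant term along the Siegel radical $U_{2n,n}$ followed by a Whittaker coefficient of $\Theta_{GL_n}^{(r)}$, whose nonvanishing is Kazhdan--Patterson. Moreover, for $r=n$ there is an additional step you do not anticipate: one must expand along the one-parameter group $I_{2n}+ye_{1,2}^*$ and use Conjecture~\ref{conj1}$_{n,n}$ to eliminate all Fourier terms but one, since the discarded terms are coefficients attached to the symplectic orbit $((n+1)(n-1))$, which dominates $(n^2)$. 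These are exactly the points you defer to ``root-exchange bookkeeping,'' and they are where the actual work of the proof lies.
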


We shall see below that the hypotheses concerning Conjecture~\ref{conj1}$_{r,i}$ are satisfied when $r=n=3$. 

\begin{proof}
The proof is based on a computation of the integral \eqref{uni1}
with $V$ the maximal unipotent subgroup of $Sp_{2n-r+1}$ and $\psi_V$ the Whittaker  character of $V$. 

According to Conjecture \ref{conj1}$_{r,n}$ the unipotent orbit attached to the
representation $\Theta_{2n}^{(r)}$ is computed as follows. 
Let $r=2n-2l+1$ for convenience, and 
write $2n=a(n,r)(2n-2l+1)+b(n,r)$. Then $a(n,r)=1$ and $b(n,2)=2l-1$. Thus, the unipotent orbit 
attached to the representation $\Theta_{2n}^{(r)}$ is the $Sp$ collapse of  
$((2n-2l+1)(2l-1))$. This is $((2n-2l)(2l))$ unless $n=2l-1$, in which case it is $((2l-1)^2)$.  This latter is the case $n=r$.

We consider first the case where the unipotent orbit attached to the representation $\Theta_{2n}^{(r)}$
is of the form $((2n-2l)(2l))$. (In the above notations we have $l=i+1$.) 
We first describe the set of Fourier coefficients attached to this unipotent orbit.
Let $P'_l$ denote the standard parabolic subgroup of $Sp_{2n}$ whose Levi part is $GL_1^{n-2l}\times GL_2^l$. We embed this group in $Sp_{2n}$ as the group of all matrices of the form $\text{diag}(a_1,
\ldots, a_{n-2l},g_1,\ldots, g_l, g_l^*,\ldots, g_1^*,a_{2n-l}^{-1},\ldots,a_1^{-1})$. Here $a_i\in GL_1$ and $g_j
\in GL_2$ and the starred entries are determined by the requirement that the matrix be in $Sp_{2n}$. Let $U'_l$ denote the 
unipotent radical of $P'_l$. Then the Fourier coefficients of an automorphic function $\varphi$ attached to the 
unipotent orbit  $((2n-2l)(2l))$ are given by
\begin{equation}\label{fc1}
\int\limits_{U'_l(F)\backslash U'_l({\A})}\varphi(ug)\,\psi_{U'_l,a,b}(u)\,du
\end{equation}
where  $a,b\in F^*$ and the character $\psi_{U'_l,a,b}$ is defined by 
\begin{equation}\label{psi1}
\psi_{U'_l,a,b}(u)=\psi\left(\sum_{i=1}^{n-2l} u_{i,i+1} + \sum_{j=n-2l+1}^{n-2} u_{j,j+2}+au_{n-1,n+2}+bu_{n,n+1}\right).
\end{equation}

Thus, to assert that the unipotent orbit $((2n-2l)(2l))$ is attached to a certain 
representation, means first that there is a choice of $a$ and $b$ such that the Fourier 
coefficient \eqref{fc1} is not zero for some choice of data, and second that for all orbits 
which are greater than or not related to $((2n-2l)(2l))$ all corresponding 
Fourier coefficients are zero. However, Conjecture~\ref{conj1}$_{r,n}$ does not specify for which values
of $a$ and $b$ the integral \eqref{fc1} is nonzero for some choice of data. 
To prove the Theorem, we will need a compatibility assumption which is stated as follows. 
\begin{assumption}\label{assume1}
Suppose that $r\neq n, n+1$.  Then there is a choice of vector $\varphi$ in the space of $\Theta_{2n}^{(r)}$
such that the integral \eqref{fc1} with $a=1$  is not zero.  
\end{assumption}
This assumption, which will not be needed when the orbit is of the form $(n^2)$,
is used to guarantee the non-vanishing of the
Whittaker coefficient of the descent. 

At this point we 
begin the computation of the Whittaker coefficient when $n\neq r$; the case $n=r$ will be treated separately. We compute the integral \eqref{uni1} with $V=U_{2n-r+1,n-r'}$, that is,
\begin{equation}\label{wh3}
\int\limits_{U_{2n-r+1,n-r'}(F)\backslash U_{2n-r+1,n-r'}({\A})}f(ug)\,\psi_{U,b}(u)\,du.
\end{equation}
As above $r'=(r-1)/2$ and the character $\psi_{U,b}$, $b\in F^*$, is given by
\begin{equation*}\label{psi2}
\psi_{U,b}=\psi(u_{1,2}+u_{2,3}+\cdots +u_{n-r'-1,n-r'}+bu_{n-r',n-r'+1}).
\end{equation*}
Similarly to the analysis of  \eqref{uni1} and \eqref{desc2}, the integral \eqref{wh3}
is equal to
\begin{multline}\label{wh4}
\int\limits_{{\A}^{n-r'}}\int\limits_{U_{2n-r+1,n-r'}(F)\backslash U_{2n-r+1,n-r'}({\A})}
\int\limits_{U_{2n,r',1}(F)\backslash U_{2n,r',1}({\A}) }\\
\omega_\psi(g)\phi(x)\,\theta_{2n}^{(r)}(uvj(x)g)\,
\psi_{U_{2n,r',1}}(u)\,\psi_{U,b}(v)\,du\,dv\,dx.
\end{multline}

Define the following Weyl element $w_0$ of $Sp_{2n}$. First, let $w$ denote the Weyl element of
$GL_n$ given by $w=\begin{pmatrix} I_{n-2l+1}&\\ &\nu\end{pmatrix}$ with $l=n-r'$ and $\nu=(\nu_{i,j})\in GL_{2l-1}$ defined
as follows. Let $\nu_{i,j}=1$ for the pairs 
$(i,j)=(1,l); (3,l+1); (5,l+2);\ldots ;(2l-1,2l-1)$ and $(i,j)=(2,1); (4,2); (6,3)\ldots (2l-2, l-1)$ and $\nu_{i,j}=0$ otherwise.
Then let $w_0=\begin{pmatrix} w&\\ &w^*\end{pmatrix}$ where the star indicates that the matrix is in $Sp_{2n}$.  

Since $\theta_{2n}^{(r)}$ is left-invariant under $w_0$, inserting $w_0$ into $\theta_{2n}^{(r)}$ and moving it rightward, 
the integral \eqref{wh4} is equal to
\begin{equation}\label{wh5}
\int\limits_{{\A}^{n-r'}}\int\limits_{V_1(F)\backslash V_1({\A})}
\int\limits_{U'_{l,1}(F)\backslash U'_{l,1}({\A}) }
\omega_\psi(g)\phi(x)\,\theta_{2n}^{(r)}(uvw_0j(x)g)\,
\psi_{U'_l,1,b}(u)\,du\,dv\,dx.
\end{equation}
Here $U'_{l,1}$ is the subgroup consisting of $u=(u_{i,j})\in U'_l$ such that
$u_{2m,2k+1}=0$ for all $1\le m\le l-1$ and $m\le k\le l-1$, and the character $\psi_{U'_l,1,b}$ is the
character $\psi_{U'_l,a,b}$ defined in \eqref{psi1} with $a=1$. Also, the group $V_1$ is the unipotent
subgroup of $Sp_{2n}$ consisting of all matrices of the form $I_{2n}+\sum_{i,j}r_{i,j}e_{i,j}^*$ where
the sum is over all pairs $(i,j)=(2k+1,2m)$ with $1\le m\le l-1$ and $m-1\le k\le l-2$. 

Let $Z_l$ denote the unipotent subgroup of $U'_l$ consisting of all matrices of the form 
$I_{2n}+\sum_{i,j}r_{i,j}e_{i,j}^*$ where the sum is over all pairs $(i,j)=(2m,2k+1)$ with
$1\le m\le l-1$ and $m\le k\le l-1$.
Then $Z_l$ is a subgroup of $U'_l$ which satisfies the condition
that $U'_l=Z_l\cdot U'_{l,1}$. We perform a root exchange in the integral \eqref{wh5}. 
Expanding the integral \eqref{wh5} along the quotient $Z_l(F)\backslash Z_l({\A})$
and exchanging roots with the group $V_1$, we deduce that this integral is equal to
\begin{equation}\label{wh6}
\int\limits_{{\A}^{n-r'}}\int\limits_{V_1({\A})}
\int\limits_{U'_l(F)\backslash U'_l({\A}) }
\omega_\psi(g)\phi(x)\,\theta_{2n}^{(r)}(uvw_0j(x)g)\,
\psi_{U'_l,1,b}(u)\,du\,dv\,dx.
\end{equation}
Using \cite{F-G}, Lemma 1, we deduce that the integral \eqref{wh6} is not zero for some choice of data if and only if the integral \eqref{fc1} with $a=1$ is not zero for some choice of data.
When $r\neq n+1$, i.e.\ $n\ne 2l$, we use Assumption~\ref{assume1} to deduce the nonvanishing of integral \eqref{wh6}, or equivalently,
the nonvanishing of a Whittaker coefficient of $\sigma_{2n-r+1}^{(r)}$. 

When $r=n+1$, so $n=2l$, we may deduce the nonvanishing of the integral 
\eqref{wh6} without requiring an analogue of Assumption~\ref{assume1}. 
Indeed, this situation is essentially the same as the cases studied in Ginzburg, Rallis and Soudry \cite{G-R-S1, G-R-S2}. 
In those works, the residue representation is defined on the linear group
$Sp_{2n}({\A})$ rather than a covering group. In our case, we are concerned with the $r=(n+1)$-fold cover, and the representation we use to define the descent is $\Theta_{2n} ^{(n+1)}$. However, the crucial ingredient for the computations of the Whittaker coefficients is the unipotent orbit attached to the residue representations. In both cases it is the orbit  $(n^2)$. Since the computations for the proof of the Theorem involve only manipulations of  unipotent groups, the cover does not enter. 
Since the notations in those references are different,  for the convenience of the reader we sketch the argument. 

Starting with the integral \eqref{wh6}, we choose $b=-1/4$. Let $\gamma=\begin{pmatrix}
1& \\ -1&1\end{pmatrix}\begin{pmatrix} 1& \tfrac12\\ &1\end{pmatrix}$, and define
$\gamma_0=\text{diag} (\gamma,\ldots,\gamma,\gamma^*,\ldots,\gamma^*)\in Sp_{2n}(F)$. This is
analogous to the matrix $a$ defined in \cite{G-R-S1}, (4.8). 
Since $\theta_{2n}^{(n+1)}$ is invariant under $\gamma_0$, inserting it in the integral \eqref{wh6} and moving
it rightward we obtain 
\begin{equation}\label{wh7}
\int\limits_{{\A}^{n-r'}}\int\limits_{V_1({\A})}
\int\limits_{U'_l(F)\backslash U'_l({\A}) }
\omega_\psi(g)\phi(x)\,\theta_{2n}^{(n+1)}(u\gamma_0vw_0j(x)g)\,
\psi_{U'_l}(u)\,du\,dv\,dx
\end{equation}
where $\psi_{U'_l}$ is given by
$$\psi_{U'_l}(u)=\psi(u_{1,3}+u_{2,4}+u_{3,5}+u_{4,6}+\cdots +u_{n-3,n-1}+u_{n-2,n}+
u_{n-1,n+1}).$$ 

Define the following Weyl element $w_0'$ of $Sp_{2n}$. If $w_0'=(w'_{0,i,j})$ then
$w'_{0,i,2i-1}=1$ for all $1\le i\le n$. All other entries are $0,\pm 1$ and  are determined uniquely such that  $w_0'\in Sp_{2n}$. This Weyl element was denoted by $\nu$ in \cite{G-R-S1}, p.\ 881. Inserting $w_0'$ in the integral \eqref{wh7}, we obtain
\begin{equation}\label{wh8}
\int\limits_{{\A}^{n-r'}}\int\limits_{V_1({\A})}
\int\limits_{Y(F)\backslash Y({\A})}\int\limits_{U_{2n,n,0}(F)\backslash U_{2n,n,0}({\A})}
\omega_\psi(g)\phi(x)\,\theta_{2n}^{(n+1)}(uyw_0'\gamma_0vw_0j(x)g)\,
\psi_{U_{2n,n}}(u)\,du\,dy\,dv\,dx.
\end{equation}
Here $Y$ is the unipotent group consisting of all matrices in $Sp_{2n}$ of the form $\begin{pmatrix}
I_n& \\ y&I_n\end{pmatrix}$ such that $y_{i,j}=0$ if $i\ge j$. The group $U_{2n,n,0}$ is the
subgroup of $U_{2n,n}$ such that $u_{i,j}=0$ for all $1\le i\le n$ and $n+1\le j\le n+i$. Finally,
the character $\psi_{U_{2n,n}}$ is the character of $U_{2n,n}$ defined by $\psi_{U_{2n,n}}(u)=\psi(u_{1,2}+u_{2,3}+\cdots u_{n-1,n})$.

At this point we carry out a sequence of root exchanges. This process is described in detail in \cite{G-R-S1}
Section 5 so we will omit the details here. Carrying out this process, we deduce that the integral
\eqref{wh6} is equal to
\begin{equation}\label{wh9}
\int\limits_{{\A}^{n-r'}}\int\limits_{V_1({\A})}
\int\limits_{ Y({\A})}\int\limits_{U_{2n,n}(F)\backslash U_{2n,n}({\A})}
\omega_\psi(g)\phi(x)\,\theta_{2n}^{(n+1)}(uyw_0'\gamma_0vw_0j(x)g)\,
\psi_{U_{2n,n}}(u)\,du\,dy\,dv\,dx.
\end{equation}
Then once again \cite{F-G}, Lemma 1, implies that integral 
\eqref{wh9} is not zero for some choice of data if and only if the inner integration along
the quotient $U_{2n,n}(F)\backslash U_{2n,n}({\A})$ is not zero for some choice of data. The
nonvanishing of this integral follows from Proposition~\ref{propconstant}
with $a=n$ and the genericity of the representation $\Theta_{GL_n}^{(n+1)}$ \cite{K-P}.

To complete the proof of the Theorem we need to consider the case when $n$ is odd and $r=n$. In
this case we have $l=(n+1)/2$.  To use formulas from the previous case, we change the notation, and now
let $P'_l$ be the standard parabolic subgroup of $Sp_{2n}$ whose Levi part is $GL_1\times GL_2^{{(n-1)}/{2}}$ and $U'_l$ be its unipotent radical.  (Above we would have written $P'_{l-1}$, $U'_{l-1}$.)
 We again
start with the integral \eqref{wh3}, define $w_0$ as above, and after conjugation we obtain the
integral \eqref{wh5}, with the groups $U'_{l,1}$ and $V_1$ changed as follows.
Now $U'_{l,1}$ is the subgroup of $U'_l$ consisting of $u=(u_{i,j})\in U'_l$ such that $u_{2m-1,2k}=0$ for all $1\le m\le l-1$ 
and $m\le k\le l-1$, and $V_1$ is the unipotent subgroup of $Sp_{2n}$ consisting of all matrices of the form $I_{2n}+\sum_{i,j}r_{i,j}e_{i,j}^*$ where the sum ranges over all pairs $(i,j)=(2k,2m-1)$ where $1\le m\le l-1$ and $m-1\le k\le l-2$.

Let $Z_l$ now be the subgroup of $U'_l$ consisting of all matrices of the form
$I_{2n}+\sum_{i,j}r_{i,j}e_{i,j}^*$ where the sum is over all pairs $(i,j)=(2m-1,2k)$ where 
$1\le m \le l-1$ and $m\le k\le l-1$ together with the condition that $r_{1,2}=0$.  Notice that
in this case it is not true that $U'_l=Z_l\cdot U'_{l,1}$. Performing root exchange similarly to our treatment of the
integral \eqref{wh5} we obtain
\begin{equation}\label{wh10}
\int\limits_{{\A}^{n-r'}}\int\limits_{V_1({\A})}
\int\limits_{U'_{l,0}(F)\backslash U'_{l,0}({\A}) }
\omega_\psi(g)\phi(x)\,\theta_{2n}^{(n)}(uvw_0j(x)g)\,
\psi_{U'_l,1,b}(u)\,du\,dv\,dx.\notag
\end{equation}
Here, $U'_{l,0}$ is the subgroup of $U'_l$ consisting of all matrices $u=(u_{i,j})\in U'_l$
such that $u_{1,2}=0$, and the character $\psi_{U'_l,1,b}$ is given by
\begin{equation}\label{psi3}
\psi_{U'_l,1,b}(u)=\psi\left(\sum_{i=1}^{n-2} u_{i,i+2} + u_{n-1,n+2}+bu_{n,n+1}\right).\notag
\end{equation}

Expand the above integral along the group $z(y)=I_{2n}+ye_{1,2}^*$. We obtain
\begin{equation}\label{wh111}
\int\limits_{{\A}^{n-r'}}\int\limits_{V_1({\A})}
\sum_{\alpha\in F}\int\limits_{F\backslash {\A}}
\int\limits_{U'_{l,0}(F)\backslash U'_{l,0}({\A}) }
\omega_\psi(g)\phi(x)\,\theta_{2n}^{(n)}(z(y)uvw_0j(x)g)\,
\psi_{U'_l,1,b}(u)\,\psi(\alpha y)\,du\,dy\,dv\,dx.\notag
\end{equation}
Assume that $b=-1/4$. Then it is not hard to check that if $\alpha\ne -1/2$ then the two inner
integrations give a coefficient which is associated to the unipotent orbit $((n+1)(n-1))$.
Since $n$ is odd this is a symplectic partition. However, it follows from Conjecture~\ref{conj1}$_{n,n}$
that the unipotent orbit associated with $\Theta_{2n}^{(n)}$ is $(n^2)$. Therefore, the only
nonzero contribution to the above integral is from $\alpha =1/2$. Let $\gamma=\begin{pmatrix}
1& \\ 1&1\end{pmatrix}\begin{pmatrix} 1& -\tfrac12\\ &1\end{pmatrix}$ and let
$\gamma_0=\text{diag} (\gamma,\ldots,\gamma,\gamma^*,\ldots,\gamma^*)\in Sp_{2n}(F)$. Using the invariance of 
$\theta_{2n}^{(n)}$ by
this element and moving it rightward, we obtain the integral 
\begin{equation}\label{wh12}
\int\limits_{{\A}^{n-r'}}\int\limits_{V_1({\A})}
\int\limits_{U'_l(F)\backslash U'_l({\A}) }
\omega_\psi(g)\phi(x)\,\theta_{2n}^{(n)}(u\gamma vw_0j(x)g)\,
\psi_{U'_l}(u)\,du\,dv\,dx,
\end{equation}
where $\psi_{U'_l}$ is given by
\begin{equation}\label{psi4}
\psi_{U'_l}(u)=\psi(u_{1,2}+u_{2,4}+u_{3,5}+\cdots +u_{n-3,n-1}+u_{n-2,n}+
u_{n-1,n+1}).\notag
\end{equation}

Next we introduce the Weyl element $w_0^*\in Sp_{2n}$ defined by 
$$w_0^*=\begin{pmatrix} 1&&\\ &w_0'&\\ &&1\end{pmatrix}.$$ 
(Here  $w_0'$ was defined following \eqref{wh7}.) Using invariance by $w_0^*$ and moving this element rightward,
\eqref{wh12}
is equal to
\begin{equation}\label{wh13}
\int\limits_{{\A}^{n-r'}}\int\limits_{V_1({\A})}
\int\limits_{Y_0(F)\backslash Y_0({\A})}\int\limits_{U_{2n,n,0}(F)\backslash U_{2n,n,0}({\A})}
\omega_\psi(g)\phi(x)\,\theta_{2n}^{(n)}(uy_0w_0^*\gamma_0vw_0j(x)g)\,
\psi_{U_{2n,n}}(u)\,du\,dy_0\,dv\,dx.\notag
\end{equation}
Here $U_{2n,n,0}$ is now the subgroup of $U_{2n,n}$ of $u=(u_{i,j})\in U_{2n,n}$
such that $u_{i,j}=0$ for $2\le i\le n$ and $n+1\le j\le n+i-1$, and $Y_0$ is the group of all matrices of the form
$$y_0=\begin{pmatrix} 1&&\\ &y'&\\ &&1\end{pmatrix}$$ 
such that $y'\in Y$ (where $Y$, as well as the character $\psi_{U_{2n,n}}$, was defined following \eqref{wh8}).  

Carrying out root exchanges similarly to our treatment of \eqref{wh8}, we obtain the integral
\begin{equation}\label{wh14}
\int\limits_{{\A}^{n-r'}}\int\limits_{V_1({\A})}
\int\limits_{Y_0({\A})}\int\limits_{U_{2n,n}(F)\backslash U_{2n,n}({\A})}
\omega_\psi(g)\phi(x)\,\theta_{2n}^{(n)}(uy_0w_0^*\gamma_0vw_0j(x)g)\,
\psi_{U_{2n,n}}(u)\,du\,dy_0\,dv\,dx.
\end{equation}
It is not hard to check that the integral \eqref{wh14} is not zero for some choice of data.
To complete the proof of the Theorem, observe that Proposition~\ref{propconstant} 
allows us to express the integral of $\psi_{U_{2n,n}}$ times any right translate of $\theta_{2n}^{(n)}$ over
$U_{2n,n}(F)\backslash U_{2n,n}({\A})$ in terms of the Whittaker function of the representation $\Theta_{GL_n}^{(n)}$.
Since the Whittaker functional of this representation is unique up to scalars (see Kazhdan-Patterson \cite{K-P}, Theorem
II.2.1), we conclude that  the integral \eqref{wh14} is
factorizable provided the data $\theta_{2n}^{(n)}\in \Theta_{2n}^{(n)}$ and $\phi$ are factorizable.
\end{proof}

\section{Some Local Computations}

We have just seen that for factorizable data the Whittaker coefficient of
$\sigma_{n+1}^{(2n)}$, $n$ odd, can be expressed as a product of local Whittaker functionals. We now analyze the corresponding
local integral in the unramified case.  Consider a
place $\nu$ where all data is unramified.  In this Section, $F$ denotes the corresponding local field, and in
all integrals we understand that we are taking the $F$-valued points of the algebraic groups that are shown.
We continue to use the convention that all matrices are embedded in the corresponding metaplectic groups via the trivial section, unless otherwise noted.

In this section, let $T_{2n}^{(n)}$ denote the inverse image in the local $n$-fold metaplectic group of the 
standard maximal torus of $Sp_{2n}(F)$,
and let $Z(T_{2n}^{(n)})$ denote its center.
Let $p$ be a generator of the maximal ideal in the ring of integers ${\mathcal O}$ of $F$.
Since $n$ is odd, $-1$ is an $n$-th power
and hence $(p,p)=1$.  This implies that $(p^{k_1},p^{k_2})=1$ for all integers $k_1$, $k_2$.
The subgroup of $T_{2n}^{(n)}$ generated by $Z(T_{2n}^{(n)})$ and by all matrices
$\text{diag}(p^{k_1},\ldots,p^{k_n},p^{-k_n},\ldots,p^{-k_1})$ is a maximal abelian subgroup of
$T_{2n}^{(n)}$. Denote this subgroup by $T_{2n,0}^{(n)}$. For a diagonal matrix 
$t=\text{diag}(a,a^*)\in Sp_{2n}(F)$ we shall denote $t_0=a\in GL_n(F)$.   Also, recall that since we are in the unramified case the 
maximal compact subgroup $Sp_{2n}({\mathcal{O}})$ of $Sp_{2n}(F)$ splits under the $n$-fold cover; fix an
injection $\iota$
of $Sp_{2n}({\mathcal{O}})$ into  $Sp_{2n}^{(n)}(F)$.

Let $f_W^{(n)}$ be the function on $Sp_{2n}(F)$ defined as follows. First, it is right-invariant under 
$\iota(Sp_{2n}({\mathcal{O}}))$.
Second,  $f_W^{(n)}(ug)=\psi_{U_{2n,n}}(u)f_W^{(n)}(g)$ for all $u\in U_{2n,n}(F)$. (The character $\psi_{U_{2n,n}}$ was defined 
before \eqref{wh9} above.) Finally, for all $(t,\zeta)\in  T_{2n,0}^{(n)}$ ($t=\text{diag}(a,a^*)\in Sp_{2n}(F)$, $\zeta\in \mu_{n})$ we have 
$f_W^{(n)}((t,\zeta))=\zeta \,\delta_Q^{\frac{n-1}{2n}}(t)\,W_{GL_n}^{(n)}((t_0,1))$. Here $W_{GL_n}^{(n)}$
is the normalized Whittaker function for the local theta representation of $GL_{n}^{(n)}$ that is unramified with respect to the
subgroup $GL_n(\mathcal{O})$ embedded in  $GL_{n}^{(n)}(F)$ in a way that is compatible with $\iota$,
and $Q=P_{2n,n}$ is the maximal parabolic subgroup of $Sp_{2n}$ whose Levi part is $GL_n$. 
(Recall from \cite{K-P} that the space of 
Whittaker
functionals for the theta function of $GL_n^{(n)}(F)$ is one dimensional.)

Consider the integral 
\begin{equation}\label{unra1}
W_{Sp_{n+1}}^{(2n)}(g)=\int\limits_{F^{n-r'}}\int\limits_{V_1}
\int\limits_{Y_0}\omega_\psi(g)\phi(x)\,f_W^{(n)}
(y_0w_0^*\gamma_0vw_0j(x)g)\,dy_0\,dv\,dx,
\end{equation}
where the function $\phi$ is the normalized unramified Schwartz function.

It may be desirable to compute $W_{Sp_{n+1}}^{(2n)}(g)$ for an arbitrary torus element; however, 
a general formula would be quite complicated. We will concentrate on a special
case (though for $n=3$ it is general)
where the computations, though not trivial, are simpler. Let $n_1,n_2$ be non-negative integers,
$a=p^{n_1}$ and $b=p^{n_2}$, and let 
$$g=\text{diag}(ab,b,I_{n-3},b^{-1},a^{-1}b^{-1}).$$
Substituting this element in \eqref{unra1}, changing variables 
in $x$ and using the invariant properties of the functions involved, we obtain
\begin{equation}\label{unra2}
\gamma(a)\,|ab^2|^{-1/2}\int\limits_{V_1}
\int\limits_{Y_0}f_W^{(n)}(y_0w_0^*\gamma_0vw_0g)\,dy_0\,dv.\notag
\end{equation}
Here $\gamma(a)$ is the Weil factor and the factor $\gamma(a)\,|ab^2|^{-1/2}$ is obtained from the change
of variables in $x$ together with the properties of the Weil representation. We can ignore the
integration over $x$ since $\phi$ is the unramified Schwartz function. 

The matrices $\gamma_0$ and $\gamma$ were defined before \eqref{wh111} above. We have
the factorization
$\gamma=\begin{pmatrix} 1&1\\ &1\end{pmatrix}\begin{pmatrix} &-1\\ 1&\end{pmatrix}
\begin{pmatrix} 1&\tfrac12\\ &1\end{pmatrix},$ and this induces a factorization of $\gamma_0$:
$\gamma_0=\gamma_0'w_0'\gamma_0''$. Changing variables in $V_1$ we may ignore the matrix $\gamma_0''$.
Moving $\gamma_0'$ to the left and using the left transformation of $f_W^{(n)}$ by matrices
in $U_{2n,n}$, we obtain the integral
\begin{equation}\label{unra3}
\gamma(a)\,|ab^2|^{-1/2}\int\limits_{V_1}
\int\limits_{Y_0}f_W^{(n)}(y_0w_0^*w_0'vw_0g)\,\psi_{Y_0}(y_0)\,dy_0\,dv.\notag
\end{equation}
Here we recall that $n$ is odd and that $Y_0$ consists of all matrices of
the form
$$y_0=\begin{pmatrix} 1&&&\\ &I_{n-1}&&\\ &y&I_{n-1}&\\ &&&1\end{pmatrix}\quad
\text{with~} y\in \text{Mat}_{(n-1)\times (n-1)}, y_{i,j}=0~\text{for all~} i\ge j;$$
we have $\psi_{Y_0}(y_0)=\psi(y_{\frac{n-1}{2},\frac{n+1}{2}})$. 

Next we move the matrix $g$ to the left and the Weyl element $w_0^*w_0'w_0$ to the right.  Doing so, we
obtain the integral 
\begin{equation}\label{unra4}
\gamma(a)\,|ab^2|^{-\frac{n-2}{2}}\int\limits_{V_2}
\int\limits_{Y_0}f_W^{(n)}(ty_0v_2)\,\psi_{Y_0}(y_0)\,dy_0\,dv_2
\end{equation}
with $t=\text{diag}(ab,b,I_{2n-4},b^{-1},a^{-1}b^{-1})$.   The group
$V_2$ consists of all matrices of the form $\left(\begin{smallmatrix} I_n&\\ v&I_n\end{smallmatrix}\right)$ where
$v=\sum_{i,j} r_{i,j}(e_{i,j}+e_{n-j+1,n-i+1})$ and the sum is over all pairs $(i,j)$ such that $1\le i\le 
\frac{n-3}{2}$ and $1\le j\le i+1$. The extra factor of 
$|ab^2|^{-\frac{n-3}{2}}$ is obtained from the change of variables in $V_1$.

Integral \eqref{unra4} is the analogous to the integral that appears on the right hand side of
Ginzburg-Rallis-Soudry \cite{G-R-S2}, p.\ 255, Eqn.\ (3.7).  Indeed, the matrices $y_0$ and $v_2$ can be viewed
as matrices of $Sp_{2n-2}$ embedded in $Sp_{2n}$ in the obvious way. This comparison enables 
us to proceed as in \cite{G-R-S2}, pp.\ 256-260, and conclude that integral \eqref{unra4} is a sum
of two terms. First we have 
\begin{equation*}\label{unra5}
\gamma(a)\,|ab^2|^{-\frac{n-2}{2}}f_W^{(n)}(t)=\gamma(a)|ab^2|^{\frac{2n-1}{2n}}W_{GL_n}^{(n)}(t_0).
\end{equation*}
This corresponds to the term which is denoted as Case 1 in \cite{G-R-S2}, p.\ 260. The second term
we obtain is the one corresponding to Case 2 in that reference. It is equal to
\begin{equation}\label{unra6}
\gamma(a)|ab^2|^{-\frac{n-2}{2}}\int\limits_{|h|>1}f_W^{(n)}(ty(h))\,\psi(h)\,|h|^{\frac{(n-1)(n-3)}
{2}}\,dh,
\end{equation}
with $y(h)=\text{diag}(I_2,h^{-1}I_{n-2},hI_{n-2},I_2)$. Write $h=p^{-m}\epsilon$ where $\epsilon$
is a unit. Then \eqref{unra6} is equal to
\begin{equation}\label{unra70}
\gamma(a)\,|ab^2|^{-\frac{n-2}{2}}\sum_{m=1}^\infty q^{m\left(1+\frac{(n-1)(n-3)}{2}\right)}\int\limits_{
|\epsilon|=1}f_W^{(n)}(ty(p^{-m}\epsilon))\,\psi(p^{-m}\epsilon)\,d\epsilon.\notag
\end{equation}
The factorization $y(p^{-m}\epsilon)=y(p^{-m})y(\epsilon)$ contributes, via the cocycle defining the metaplectic group, the factor
$(\epsilon,p)_n^{m(n-2)}$. Hence the above integral is equal to
\begin{equation}\label{unra7}
\gamma(a)\,|ab^2|^{-\frac{n-2}{2}}\sum_{m=1}^\infty q^{m\left(1+\frac{(n-1)(n-3)}{2}\right)}f_W^{(n)}(ty(p^{-m}))
\int\limits_{|\epsilon|=1}(\epsilon,p)_n^{m(n-2)}\,\psi(p^{-m}\epsilon)\,d\epsilon.
\end{equation}
The inner integral is zero unless $m=1$, and when $m=1$ it is equal to $q^{-1/2}G_{n-2}^{(n)}(p)$
where the last term is a normalized $n$-th order Gauss sum, as in \cite{F-G}, Section 6. Substituting this into \eqref{unra7} we
obtain 
\begin{equation}\label{unra8}
\gamma(a)\,|ab^2|^{-\frac{n-2}{2}}q^{-\frac{(n-2)(2n-1)}{2n})}W_{GL_n}^{(n)}(t_0y(p)_0).\notag
\end{equation}

Thus we have established the following result, relating the Whittaker coefficients of the descent on a suitable
cover of the symplectic group to the Whittaker coefficients of the theta function on a cover of the general linear group.
\begin{theorem}\label{th2}
Let $n\ge 3$ be an odd integer.  Let  $a=p^{n_1}$, $b=p^{n_2}$, with $n_1,n_2\geq0$.  Let
$$t=\text{\rm diag}(ab,b,I_{n-3},b^{-1},a^{-1}b^{-1})\in Sp_{n+1}(F),$$
$$t_0=\text{\rm diag}(ab,b,I_{n-2})\in GL_n(F),\quad t_1=\text{\rm diag}(ab,b,pI_{n-2})\in GL_n(F).$$
 Suppose that Conjecture~\ref{conj1}$_{n,i}$ holds for $(n-1)/2\leq i\leq n$.  Then we have the identity
\begin{equation}\label{unra9}
W_{Sp_{n+1}}^{(2n)}(t)=\gamma(a)\,|ab^2|^{\frac{2n-1}{2n}}\left(W_{GL_n}^{(n)}(t_0)+
q^{-\frac{(n-2)(2n-1)}{2n}}W_{GL_n}^{(n)}(t_1)\right).
\end{equation}
\end{theorem}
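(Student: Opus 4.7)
The strategy is to start from the defining integral \eqref{unra1} with $g$ replaced by the specified torus element $t=\text{diag}(ab,b,I_{n-3},b^{-1},a^{-1}b^{-1})$ and reduce it, through a sequence of conjugations and changes of variable, to an expression that can be evaluated in terms of the Whittaker function $W_{GL_n}^{(n)}$ of the theta representation on $GL_n^{(n)}$. The first step is to substitute $t$ into \eqref{unra1} and use the standard transformation properties of the Weil representation together with the fact that $\phi$ is the unramified Schwartz function: this lets me absorb the $x$-integration into a factor $\gamma(a)|ab^2|^{-1/2}$ (the Weil factor together with a Jacobian). The idea is to shift the torus element across $j(x)$ by changing variables in $x$, noting that $j(x)$ is stable under conjugation by diagonal matrices up to a scaling of the variables.

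Next I would factor $\gamma$ via its Iwasawa-type decomposition
$\gamma=\bigl(\begin{smallmatrix}1&1\\&1\end{smallmatrix}\bigr)\bigl(\begin{smallmatrix}&-1\\1&\end{smallmatrix}\bigr)\bigl(\begin{smallmatrix}1&1/2\\&1\end{smallmatrix}\bigr)$, inducing a factorization $\gamma_0=\gamma_0'w_0'\gamma_0''$. The upper triangular piece $\gamma_0''$ can be absorbed by a change of variables in $V_1$, while $\gamma_0'$ is moved to the left past $f_W^{(n)}$ and produces the character $\psi_{Y_0}$ on $y_0$ after a straightforward computation. At this point I would move $t$ leftward through $y_0$ and the remaining Weyl element $w_0^*w_0'w_0$, collecting a modular character $\delta$-factor $|ab^2|^{-(n-3)/2}$ from the change of variables in $V_1$. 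This brings the integral to the shape \eqref{unra4}, where the inner variables $y_0$ and $v_2$ now lie in the image of $Sp_{2n-2}$ inside $Sp_{2n}$.

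The key reduction is to recognize \eqref{unra4} as parallel in structure to the integral in Ginzburg–Rallis–Soudry \cite{G-R-S2}, p.\ 255, Eqn.\ (3.7). I would then apply exactly the decomposition carried out in pp.\ 256--260 of that reference, which shows that the integral splits as a sum of a \emph{Case 1} contribution (coming from the identity double coset, which gives a clean evaluation at $t$) and a \emph{Case 2} contribution (coming from a non-trivial double coset, which produces an integral of the form \eqref{unra6} against the character $\psi$). Case 1 evaluates directly via the definition of $f_W^{(n)}$: on the torus element $t$ we get $\delta_Q^{(n-1)/2n}(t)\,W_{GL_n}^{(n)}(t_0)$, and combining with all previously accumulated scalars yields the first summand $\gamma(a)|ab^2|^{(2n-1)/2n}W_{GL_n}^{(n)}(t_0)$.

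The main obstacle lies in evaluating Case 2. The integral is over $|h|>1$, and writing $h=p^{-m}\epsilon$ reduces the $\epsilon$-integration to the Gauss-sum calculation
\begin{equation*}
\int_{|\epsilon|=1}(\epsilon,p)_n^{m(n-2)}\,\psi(p^{-m}\epsilon)\,d\epsilon,
\end{equation*}
which vanishes unless $m=1$ and equals $q^{-1/2}G_{n-2}^{(n)}(p)$ in that case. The delicate point is tracking the cocycle contribution $(\epsilon,p)_n^{m(n-2)}$ that arises when one factors $y(p^{-m}\epsilon)=y(p^{-m})y(\epsilon)$ inside the metaplectic group; here one must use that since $n$ is odd, $(p,p)=1$ and so all Hilbert-symbol contributions from the $p$-powers collapse. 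Once this is done, collecting all powers of $q$ and comparing with the claimed exponent $q^{-(n-2)(2n-1)/(2n)}$ in \eqref{unra9} gives the second summand, noting that $t_1=t_0\,y(p)_0$ so that $W_{GL_n}^{(n)}(t_1)$ emerges as claimed.
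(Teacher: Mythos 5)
Your proposal follows essentially the same route as the paper's own computation: the same initial change of variables in $x$ producing $\gamma(a)|ab^2|^{-1/2}$, the same factorization $\gamma_0=\gamma_0'w_0'\gamma_0''$ and conjugations leading to the integral \eqref{unra4}, the same appeal to the Case 1/Case 2 decomposition of Ginzburg--Rallis--Soudry \cite{G-R-S2}, pp.~255--260, and the same $m=1$ Gauss-sum evaluation with the cocycle factor $(\epsilon,p)_n^{m(n-2)}$. The argument is correct as outlined and matches the paper step for step.
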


\section{A Case of Conjecture~\ref{conj1}}

Suppose $n=3$.  Then Conjecture~\ref{conj1}$_{3,i}$ is known for $i=1$ (\cite{K-P}, Corollary II.2.6) and $i=2$ (\cite{F-G}, 
Proposition 3).  
In this section we sketch the proof of Conjecture~\ref{conj1}$_{3,3}$.
\begin{proposition}\label{conjprop1}
Conjecture \ref{conj1}$_{3,3}$ holds. That is,  ${\mathcal O}(\Theta_{6}^{(3)})=(3^2)$.
\end{proposition}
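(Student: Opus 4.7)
The plan is to verify the two halves of the orbit assertion ${\mathcal O}(\Theta_6^{(3)}) = (3^2)$: that every Fourier coefficient attached to a symplectic partition $\lambda$ of $6$ with $\lambda \not\leq (3^2)$ in the dominance order vanishes, and that at least one Fourier coefficient attached to $(3^2)$ is non-zero for some choice of data. The symplectic partitions of $6$ that must be ruled out are $(6)$, $(4,2)$, and $(4,1^2)$; the non-symplectic partitions $(5,1)$, $(3,2,1)$, $(3,1^3)$ have $Sp$-collapses that are $\leq(3^2)$, so their Fourier coefficients reduce to ones attached to smaller orbits and require no separate treatment.

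For each of the three relevant larger or incomparable orbits, the strategy will be to realize its Fourier coefficient as an iterated integral whose outermost integration is the constant term along $L_{6,a}$ for a suitable $a \in \{1,2\}$.  Applying Proposition~\ref{propconstant} converts that outer constant term into $\chi_{Sp_6^{(3)},\Theta}(g)\,\theta_{GL_a}^{(3)}\otimes \theta_{2(3-a)}^{(3)}$, and the remaining inner integrations form a Fourier coefficient of $\Theta_{2(3-a)}^{(3)}$ (and possibly of $\Theta_{GL_a}^{(3)}$) along a smaller unipotent with an induced character.  Combining Conjecture~\ref{conj1}$_{3,1}$ (which forces $\Theta_2^{(3)}$ to be generic, so ${\mathcal O}(\Theta_2^{(3)})=(2)$) with Conjecture~\ref{conj1}$_{3,2}$ (so ${\mathcal O}(\Theta_4^{(3)})=(2^2)$) and the genericity of $\Theta_{GL_a}^{(3)}$ from \cite{K-P}, each of the three Fourier coefficients is reduced to one attached to an orbit on a smaller theta representation that is strictly larger than or not related to the orbit supporting that representation, and hence vanishes. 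This mirrors the argument of Lemma~\ref{lemconstant1} above.

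For the non-vanishing of the $(3^2)$ coefficient, the plan is to use the realization of $\Theta_6^{(3)}$ as the residue at $s=(r+1)/2r=2/3$ of the maximal-parabolic Eisenstein series $E_3(g,s)$ induced from $\Theta_{GL_3}^{(3)}$ on $P_{6,3}^{(3)}$.  A Fourier coefficient of $(3^2)$-type is, in a suitable realization, an integration over the abelian radical $L_{6,3}$ against a character corresponding to a rank-$3$ (non-degenerate) symmetric form.  Unfolding the Eisenstein series against this character and then taking the residue at $s=2/3$ localizes the computation to an integral built from the Whittaker coefficient of $\Theta_{GL_3}^{(3)}$, which is non-zero by \cite{K-P}.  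This exhibits data for which the $(3^2)$ coefficient is non-vanishing.

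The main technical obstacle will be the vanishing statement for the orbit $(4,2)$.  Unlike $(6)$, whose coefficient is the Whittaker integral and reduces directly by a single constant-term step to a Whittaker integral on $\Theta_4^{(3)}$ or $\Theta_2^{(3)}$, and unlike $(4,1^2)$, which also yields to a single constant-term manipulation, the $(4,2)$ orbit sits at a position where naively taking a constant term along $L_{6,1}$ or $L_{6,2}$ does not immediately exhibit the inner integral as a Fourier coefficient attached to an over-sized orbit on $\Theta_4^{(3)}$.  Resolving this will require a sequence of Fourier expansions along abelian unipotent subgroups combined with root exchanges of the type used in Section~\ref{descent}, arranged so that every surviving term is manifestly attached either to an orbit strictly larger than $(2^2)$ on $\Theta_4^{(3)}$ (and hence vanishes by Conjecture~\ref{conj1}$_{3,2}$) or to a coefficient of $\Theta_2^{(3)}$ that is killed by a further integration. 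This bookkeeping is the heart of the proof.
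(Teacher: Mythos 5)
Your outline correctly identifies the three orbits to be killed and the one to be realized, but two of the four steps have genuine problems. First, your non-vanishing argument for $(3^2)$ targets the wrong orbit: integrating over the abelian (Siegel) radical $L_{6,3}$ against a character given by a nondegenerate rank-$3$ symmetric form produces a Fourier coefficient attached to the nilpotent orbit $(2^3)$ (the corresponding nilpotent element has square zero), and $(2^3)<(3^2)$ in the dominance order, so its non-vanishing says nothing about whether $(3^2)\in{\mathcal O}(\Theta_6^{(3)})$. The coefficients genuinely attached to $(3^2)$ live on the non-abelian radical $R$ of the parabolic with Levi $GL_2\times SL_2$, with character $\psi_R(r)=\psi(r_{1,3}+r_{2,4})$; the paper obtains their non-vanishing from Proposition~\ref{propconstant} together with a chain of conjugations and root exchanges relating $J_{R,\psi_R}$ to the coefficient along $U_{6,3}$ and hence to the (known, nonzero) Whittaker functional of $\Theta_{GL_3}^{(3)}$. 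Relatedly, your mechanism for the orbit $(6)$ cannot literally be ``an outer constant term along $L_{6,a}$'': the Whittaker character is nontrivial on every $L_{6,a}$, so the Whittaker integral never factors through a constant term. The paper instead writes $\Theta_6^{(3)}$ as a residue of the Eisenstein series induced from $\Theta_4^{(3)}$ and unfolds the Whittaker coefficient of that Eisenstein series, reducing to the non-genericity of $\Theta_4^{(3)}$ (\cite{F-G}, Lemma 2) --- a different and necessary device.

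Second, and more seriously, the vanishing for $(4,2)$ --- which you rightly call the heart of the proof --- is not resolved by the global root-exchange bookkeeping you propose, and I do not see how it could be: the $(4,2)$ character is nontrivial on the $GL_1$- and $GL_2$-blocks of every relevant $L_{6,a}$, so no constant-term step exposes an inner coefficient of $\Theta_4^{(3)}$ or $\Theta_2^{(3)}$, and moreover the characters in this orbit fall into split and non-split classes under $L(F)$-conjugacy that cannot be interchanged rationally. The paper's actual argument is of a different nature: it observes that at half the places any given coefficient is of split type, reduces to showing that the local twisted Jacquet modules $J_{V,\widetilde\psi_{V,\alpha}}(\Theta_6^{(3)})$, $\alpha\in F^*$, vanish at unramified places, identifies these (via root exchange) with $J_{RN,\psi_R\psi_\alpha}$, and then runs a representation-theoretic argument on $SL_2$: the $SL_2$-module $J_{R,\psi_R}(\Theta_6^{(3)})$ has only the trivial representation as an unramified constituent (this is where the explicit computation of the torus action on $J_{U,\psi_U}$ enters), so all further Jacquet modules along $N$ with nontrivial $\psi_\alpha$ vanish. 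Without some substitute for this local input --- or at least a concrete global expansion in which every term is demonstrably attached to an orbit dominating $(2^2)$ on $\Theta_4^{(3)}$ --- your proof of the $(4,2)$ case remains a statement of intent rather than an argument.
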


As a consequence, the hypotheses of Theorem~\ref{th2} are satisfied for $n=3$, and we conclude unconditionally the
Whittaker function for the descent on the $6$-fold cover of $Sp_4$ is Eulerian, and 
given at unramified places by formula \eqref{unra9}.

\begin{proof}
To prove the Proposition we need to prove that the representation $\Theta_{6}^{(3)}$ has no nonzero Fourier coefficients 
corresponding to the unipotent orbits $(6)$ and $(42)$. Once we do so,  an argument similar to the one given in 
Ginzburg \cite{G1}, Thm.\ 3.1, will imply that this representation also has no nonzero Fourier coefficients corresponding to the orbit $(41^2)$. We also need to show that $\Theta_{6}^{(3)}$  has a nonzero Fourier coefficient
corresponding to the unipotent orbit $(3^2)$. This last follows from Proposition~\ref{propconstant}.  So the critical step is the
vanishing of Fourier coefficients attached to the two unipotent orbits $(6)$ and $(42)$.

For convenience, we drop the prior notation for unipotent subgroups and characters, and will introduce the ones needed anew.

To prove this claim for $\Theta_{6}^{(3)}$ we make use of the fact that this representation can be obtained as a residue of an Eisenstein series induced from a  suitable theta function of a smaller rank group. This is explained in Section~\ref{def} above. 
First we consider the unipotent orbit $(6)$.
Since $\Theta_{6}^{(3)}$ is a residue of $E_{\Theta_{4}}^{(3)}(g,s)$, in order to prove that
$\Theta_{6}^{(3)}$ is not generic it is enough to prove that $E_{\Theta_{4}}^{(3)}(g,s)$ is not generic. Moreover, let $U$ denote the standard maximal unipotent  of $Sp_6$, and let $\psi_U$ denote a Whittaker  character of $U$;
without loss we take $\psi_U(u)=\psi(u_{1,2}+u_{2,3}+au_{3,4})$ with
$a\in F^*$. Then a standard unfolding argument shows that the integral
$$\int\limits_{U(F)\backslash U({
\A})}E_{\Theta_{4}}^{(3)}(ug,s)\,\psi_U(u)\,du$$ is not zero for some choice of data if and only if the representation
$\Theta_{4}^{(3)}$ is generic. However, it follows from Friedberg-Ginzburg
\cite{F-G}, Lemma 2, that this representation is not generic. Hence, for $\text{Re}(s)$ large,  the above integral is zero for all choices of data, and by continuation it follows that $\Theta_{6}^{(3)}$ is not generic.

Next we consider the Fourier coefficients corresponding to the unipotent orbit $(42)$. Let $V\subseteq U$ denote the subgroup 
$V=\{ u=(u_{i,j})\in U\ :\ u_{2,3}=u_{4,5}=0\}$. This is the unipotent radical of the parabolic subgroup of $Sp_6$ whose Levi part is $L=GL_1\times GL_2$. The embedding of $L$ into $Sp_6$ is given by $L=\{
\text{diag}(g_1,g_2,g_2^*,g_1^{-1})\ :\ g_1\in GL_1;\ g_2\in
GL_2\}$. Let $\alpha_1,\alpha_2\in F$ and
$\alpha_3,\alpha_4\in F^*$. Consider the Fourier coefficient defined by
\begin{equation}\label{421}
\int\limits_{V(F)\backslash V({
\A})}\varphi(vg)\,\psi_{V,\alpha_i}(v)\,dv,
\end{equation}
where $\psi_{V,\alpha_i}(v)=\psi(\alpha_1
v_{1,2}+\alpha_2 v_{1,3}+ \alpha_3 v_{3,4}+ \alpha_4 v_{2,5})$.
This Fourier coefficient is
associated with the unipotent orbit $(42)$ provided the connected component of the stabilizer of $\psi_{V,\alpha_i}$ inside
$L(F)$ is trivial (see for example Ginzburg \cite{G1}).
If $\alpha_3\alpha_4=-\epsilon^2$ for some $\epsilon\in F^*$, then one can find a suitable discrete element $\gamma_L\in L$ 
and $\alpha \in F^*$ such that the integral \eqref{421} is equal to
\begin{equation}\label{4211}
\int\limits_{V(F)\backslash V({\A})}\varphi(v\gamma_L
g)\,\widetilde{\psi}_{V,\alpha}(v)\,dv,
\end{equation}
where $\widetilde{\psi}_{V,\alpha}(v)=\psi(\alpha v_{1,2}+v_{1,3}+v_{2,4})$.
We will say that the Fourier coefficient \eqref{421} is of split type if $\alpha_3\alpha_4=-\epsilon^2$  for some $\epsilon\in F^*$, 
and non-split otherwise.

Since at half of the places $\nu$ we have the condition $\alpha_3\alpha_4=-\epsilon^2$ for some $\epsilon\in F_\nu^*$, 
 to complete the proof it is enough to
prove that for every finite local unramified place, the Jacquet 
module $J_{V,\widetilde{\psi}_{V,\alpha}}(\Theta_6^{(3)})$ is zero for all characters $\widetilde{\psi}_{V,\alpha}$
with $\alpha\in F^*$, where $\Theta_6^{(3)}$ now denotes the local theta representation at $\nu$.
To do so we consider the Jacquet module associated with the unipotent orbit $(3^2)$. 
Let $R$ denote the subgroup of $U$ defined by $R=\{ u\in U\ :\
u_{1,2}=u_{3,4}=u_{5,6}=0\}$. 
Then $R$ is the unipotent radical of the maximal parabolic subgroup  
of $Sp_6$ whose Levi part is $GL_2\times SL_2$.
Let $\psi_R$ denote the character $\psi_R(r)=\psi(r_{1,3}+r_{2,4})$ of $R$ (that is, of $R(F_\nu)$; from now on we take the $F_\nu$
points of the algebraic groups in question without changing the notation).

Next, we compute 
$J_{RN,\psi_R\psi_{\alpha}}(\Theta_6^{(3)})$, where $N$ is the unipotent radical of $SL_2$ embedded in $Sp_6$ as the group of all matrices
$$n(x)=\begin{pmatrix} 1&x&&&&\\ &1&&&&\\ &&1&x&&\\ &&&1&&\\
&&&&1&-x\\ &&&&&1\end{pmatrix},$$ 
and $\psi_{\alpha}$ is the
character of $N$ defined by $\psi_\alpha(n(x))=\psi(\alpha x)$, with
$\alpha\in F_\nu$. 
Using the one-parameter subgroups $\{z(k)=I_6+ke_{3,4}\}$ and $\{y(m)=I_6+me_{2,3}-me_{4,5}\}$,
we perform root exchanges using the Lemma in Section 2 of \cite{G-R-S1}.
This gives the isomorphism of Jacquet modules
\begin{equation}\label{jacquet-isom}
J_{RN,\psi_R\psi_{\alpha}}(\Theta_6^{(3)})\cong
J_{V,\widetilde{\psi}_{V,\alpha}}(\Theta_6^{(3)}).
\end{equation}
Thus, to prove Prop.~\ref{conjprop1}, it is enough to prove that for all
$\alpha\neq0$, the Jacquet module $J_{RN,\psi_R\psi_{\alpha}}(\Theta_6^{(3)})$ is zero.

 It is not hard to check that the stabilizer of $\psi_R$ inside $GL_2\times SL_2$ is $SL_2$ embedded diagonally. Thus, 
$J_{R,\psi_R}(\Theta_6^{(3)})$, if not zero, defines an unramified
representation of $SL_2$. 
Let $\sigma$ denote an irreducible unramified representation of 
$SL_2$ such that the space
$$\text{Hom}_{SL_2}(\sigma,J_{R,\psi_R}(\Theta_6^{(3)}))$$ 
is not zero. Since $\sigma$ is unramified it is a sub-representation
of $\Ind_{B}^{SL_2}\chi\delta_B^{1/2}$. Applying Frobenius reciprocity
we deduce that the space 
$$\text{Hom}_{GL_1}(\chi,J_{RN,\psi_R}(\Theta_6^{(3)}))$$
is not zero. We shall show below that we must have $\chi=1$, which implies that $\sigma=1$.
Since the Jacquet module is always of finite length, we then conclude that all Jacquet modules 
$J_{RN,\psi_R\psi_{\alpha}}(\Theta_6^{(3)})$ with $\alpha\ne 0$ are
zero. Thus once we have established that $\chi$ must be $1$ (and in particular,
that $J_{R,\psi_R}(\Theta_6^{(3)})$ is nonzero), the Proposition will be proved.

To do so, we use the isomorphism \eqref{jacquet-isom} above with $\alpha=0$.
If $\alpha=0$, then the Jacquet module $J_{RN,\psi_R\psi_{0}}(\Theta_6^{(3)})=J_{RN,\psi_R}(\Theta_6^{(3)})\cong J_{V,\psi_V}(\Theta_6^{(3)})$, if not zero, defines an unramified character of $GL_1$, which is embedded in $Sp_6$ as the
group of all matrices $\{t(a)\}$, with $t(a)={\text{diag}}(a,a^{-1},a,a^{-1},a,a^{-1})$. 
Here $\psi_V=\widetilde{\psi}_{V,\alpha}$ with $\alpha=0$.
For $1\le i\le 3$, let $w_i$ denote the three simple reflections of the  Weyl group of $Sp_6$. In matrices we have
$$w_1=\begin{pmatrix} &1&&&&\\ 1&&&&&\\ &&1&&&\\ &&&1&&\\
&&&&&1\\ &&&&1&\end{pmatrix};\ \ w_2=\begin{pmatrix} 1&&&&&\\ &&1&&&\\ &1&&&&\\ &&&&1&\\
&&&1&&\\ &&&&&1\end{pmatrix};\ \ w_3=\begin{pmatrix} 1&&&&&\\ &1&&&&\\ &&&1&&\\ &&-1&&&\\
&&&&1&\\ &&&&&1\end{pmatrix}.$$ 
Let $Y=\{y_1(m_1)=I_6+m_1e_{4,3}\}$, and let $U_1$ be the subgroup of the maximal unipotent subgroup $U$ of $Sp_6$ consisting 
of all matrices $u=(u_{i,j})\in U$ such that $u_{2,4}=u_{3,5}=u_{3,4}=0$. Let $\psi_{U_1}(u_1)=\psi(u_1(1,2)+u_1(2,3))$. Then 
$U_1Y=(w_3w_2)V(w_3w_2)^{-1}$. 
Hence, $J_{V,\psi_V}(\Theta_6^{(3)})$ is isomorphic to 
$J_{U_1,\psi_{U_1}}(\Theta_6^{(3)})$. Let $X=\{I_6+ke_{2,4}+ke_{3,5}\}$, and perform a root exchange of this unipotent subgroup with the 
group $Y$. Then, since $\Theta_6^{(3)}$ is not generic,
we obtain that $J_{U_1,\psi_{U_1}}(\Theta_6^{(3)})$ is isomorphic 
to $J_{U,\psi_{U}}(\Theta_6^{(3)})$. 
This Jacquet module is not zero. Indeed, using an argument similar to
\cite{B-F-G} Theorem 2.3, the nonvanishing of this Jacquet module
follows from the genericity of the theta representation of $GL_3^{(3)}$. 
From this we deduce that the Jacquet module 
$J_{R,\psi_R}(\Theta_6^{(3)})$ is not zero.
Moreover, the Jacquet module $J_{U,\psi_{U}}(\Theta_6^{(3)})$ acts by the character $|a|^4$ under the torus
$\text{diag}(aI_3,a^{-1}I_3)\in Sp_6$. Taking into an account the 
various root exchanges, we deduce that the group 
$GL_1=\{t(a)\}$ acts trivially on the Jacquet module $J_{RN,\psi_R}(\Theta_6^{(3)})$. 
Thus $\chi$ above must be $1$.
This completes the proof of the Proposition.
\end{proof}

\end{document}